\author{Robson Ricardo de Araujo \qquad Grasiele C. Jorge}
\title{Construction of full diversity $D_n$-lattices for all $n$}
\newtheorem{teorema}{Theorem}[section]
\newtheorem{ex}{Example}[section]
\newtheorem{corol}{Corollary}[section]
\newtheorem{defin}{Definition}[section]
\newtheorem{lema}{Lemma}[section]
\newtheorem{obs}{Remark}[section]
\def\linha#1{%
  \hbox to \hsize{%
      \vbox{\centering #1}}%
      \vspace{4mm}}
\begin{document}

 \maketitle

\section{Introduction}

Lattice coding have been used mainly for Gaussian channels (\cite{sloane}). For these channels it is important that the lattices have good sphere packing, that is, have high density. Lattices have been used also to obtain codes for Rayleigh fading channel. In this case, it is desired to get lattices having maximum diversity and great minimum product distance. Thus, in view of possible applications to these two types of channels it is desirable to find lattices having maximum diversity and having, at the same time, high density and great minimum product distance (\cite{boutros}).

For general lattices it is not a simple task to find lattices having maximum diversity and to estimate their minimum product distance. For lattices obtained via number fields through a twisted homomorphism (\textit{algebraic lattices}), this task can be easier. Totally real number fields produce algebraic lattices having maximum diversity and there is a closed form for the minimum product distance when principal ideals inside the ring of integers of these number fields are considered.

In \cite{bayer-oggier} rotated versions of the lattice $\mathbb{Z}^n$ are constructed via totally real number fields when $n$ is a prime number or $n=(p-1)/2$ for some prime number $p$ and also for others mixed values of $n$. In that paper the authors discuss the minimum product distance of those lattices in several examples. More generally, it is possible to construct rotated $\mathbb{Z}^n$-lattices for an odd number $n$ (\cite{sethoggier}). Full diversity constructions of $\mathbb{Z}^n$ are also known for $n$ a power of $2$ (\cite{bayer-nebe}, \cite{andrade}). As it is well known, the center density of the lattice $\mathbb{Z}^n$ gets very low as $n$ grows. Yet, the sublattices $D_n$ have higher density. For $n=3,4,5$ it is known that $D_3$, $D_4$ and $D_5$ have the higher possible packing density in these dimensions (\cite{sloane}). In \cite{grasi2} it is presented the construction of rotated lattices $D_n$ and their product distance for $n$ equal to a power of $2$ and for $n=(p-1)/2$, where $p\geq 3$ is a prime number (see also \cite{grasi}).

Following this direction, in this work we use the algebraic construction of $\mathbb{Z}^n$ of \cite{sethoggier} (for $n=2^m>1$ and $n>1$ an odd number) to obtain its sublattice $D_n$ with maximum diversity. We then consider mixed constructions to obtain rotated versions of $D_n$ for any $n\geq 2$ with maximum diversity (Theorem \ref{teorema9}). To obtain a closed form for the product distance of the constructed rotated lattices we prove first that the $\mathbb{Z}$-module used to construct $\mathbb{Z}^n$, $n$ odd, is an ideal and provide a sufficient condition to this ideal to be principal (Theorems \ref{teorema14} e \ref{teorema2}). Assuming this condition, the minimum product distance of $\mathbb{Z}_n$ is obtained (Corollary \ref{corol1}) and a bound for this distance in $D_n$ is derived (Corollary \ref{corol3}). Under the same above condition, we extend these results for the mixed constructions (Theorems \ref{teorema15} and \ref{teorema11}). In particular, for example, we see that the rotated lattice $D_6$ obtained here has better minimum product distance than the obtained in \cite{grasi} via other construction.

This work is organized as follows. In Section \ref{sec0} some preliminary concepts and results of algebraic lattices are introduced. In Section \ref{sec1} a rotated version of $D_n$, for any odd number $n$, is presented. In Section \ref{sec2} we prove that the $\mathbb{Z}$-module used to obtain $\mathbb{Z}^n$ is an ideal and analyse the minimum product distance of $\mathbb{Z}^n$ and $D_n$, $n$ odd, given a condition. In Section \ref{sec4}, we obtain $D_n$ from a known construction of $\mathbb{Z}^n$, for $n$ a power of two, and present a mixed construction and study the minimum product distance of $\mathbb{Z}^n$ and $D_n$, for $n$ an even integer number. Finally, in Section \ref{sec5} we discuss the advantage of using $D_n$ instead of $\mathbb{Z}^n$ looking some examples.

\section{Preliminaries about lattices}\label{sec0}
%Se for fazer seção de preliminares, usar beta ao invés de alfa no homomorfismo torcido e varphi para o homomorfismo
% definir volume de lambda como vol(\lambda)
% delta é a densidade de centro
% lamba é distânciamínima

A lattice of rank $k\leq n$ is a discrete additive subgroup of $\mathbb{R}^n$. Equivalently, a lattice of rank $k$ is a set generated by $k$ linearly independent elements of $\mathbb{R}^n$ over $\mathbb{Z}$. If $k=n$ we say the lattice is complete. All lattices treated in this work are complete. Because of this, from now on this information will be omitted.

A set of generators of a lattice $\Lambda\subset \mathbb{R}^n$ is called basis. Considering $\{v_1,\ldots,v_n\}$ a basis of $\Lambda$, the matrix $n\times n$ whose each $i$-th line is formed by the entries of the vector $v_i$ is called generator matrix of $\Lambda$. An element $x\in\mathbb{R}^n$ belongs to a lattice $\Lambda$ having generator matrix $M$ if and only if there exists $y\in\mathbb{Z}^n$ such that $x=yM$. The square matrix $G=MM^T$ is called Gram matrix of $\Lambda$, where $M^T$ denotes the transpose matrix of $M$. Two different basis of $\Lambda$ give different generator matrices, but they give the same Gram matrix. So, the determinant $det(\Lambda)$ of a lattice $\Lambda$ is defined to be the determinant of its Gram matrix. The volume of a lattice $\Lambda$ is defined to be $vol(\Lambda):=\sqrt{|det(\Lambda)|}$ and coincides with the volume of the fundamental region of $\Lambda$, that is defined by
$$\mathcal{P}(B)=\{a_1v_1+\ldots+a_nv_n~:~a_i\in[0,1), i=1,\ldots,n\}$$
where $B=\{v_1,\ldots,v_n\}$ is a basis of $\Lambda$. The norm of the nonzero vector in $\Lambda$ having the lowest norm among all elements in $\Lambda$ is called minimum norm of $\Lambda$ and is denoted by $\lambda$. Any additive subgroup $\Lambda'$ of $\Lambda$ is called sublattice of $\Lambda$.

This work talks about two important families of lattices: $\mathbb{Z}^n$ and $D_n$. The lattice $\mathbb{Z}^n\subset\mathbb{R}^n$ is called cubic lattice and is generated by $e_i=(0,\ldots,0,1(i-th),0,\ldots,0)$, $1\leq i\leq n$. $\mathbb{Z}^n$ has determinant $1$, volume $1$ and minimum norm $1$. In turn, $D_n$ is a sublattice of $\mathbb{Z}^n$ given by
$$D_n=\{(x_1,\ldots,x_n)\in\mathbb{Z}^n~:\exists m\in\mathbb{Z}~s.t.~x_1+x_2+\ldots+x_n=2m\}$$
that has determinant $4$, volume $2$ and minimum norm $\sqrt{2}$.

The center density of a lattice $\Lambda\subset\mathbb{R}^n$ is defined to be $\delta = \rho^n/vol(\Lambda)$, where $\rho=\lambda/2$ is the packing radius of $\Lambda$. Find a lattice having high center density in a certain dimension is a task related to the problem of the sphere packing (see \cite{sloane}). Specially, lattices having good center density are useful for gaussian channels in the Coding Theory.

We say a lattice $\Lambda\subset \mathbb{R}^n$ has maximum diversity if for all $x=(x_1,\ldots,x_n)\in\Lambda$ such that $x\neq 0$ then $x_i\neq 0$ for any $i=1,\ldots,n$. If $\Lambda$ has maximum diversity, we define the minimum product distance of $\Lambda$ by
$$d_{p,min}(\Lambda)=\inf\{|x_1\ldots x_n|~:~0\neq x=(x_1,\ldots,x_n)\in\Lambda\}.$$
Lattices having maximum diversity and great minimum product distance are useful for Rayleigh fading channels.

If a lattice $\Lambda_1$ is obtained from an other $\Lambda_2$ by a rotation or by a scalling, we say that $\Lambda_1$ and $\Lambda_2$ are equivalent. Particularly, if a lattice $\Lambda_1$ is obtained from an other $\Lambda_2$ by a rotation we can say that $\Lambda_1$ is a rotated version of $\Lambda_2$. Equivalent lattices have same center density, but one of them can have maximum diversity and the other not. Also, minimum product distances of equivalent lattices having maximum diversity can be different. 

It is possible obtain lattices from the Number Theory. Consider $\mathbb{K}$ a totally real number field of degree $n$, $\mathcal{O}_\mathbb{K}$ the ring of integers of $\mathbb{K}$ and $M\subset\mathcal{O}_\mathbb{K}$ a $\mathbb{Z}$-module of rank $n$. Suppose that $\sigma_1,\ldots,\sigma_n$ are the monomorphisms from $\mathbb{K}$ to $\mathbb{R}$. Let $\beta\in\mathcal{O}_\mathbb{K}$ be a number such that $\beta_i:=\sigma_i(\beta)>0$, for $i=1,\ldots,n$. For this, we say that $\beta$ is a totally positive number in $\mathbb{K}$. We define the twisted homomorphism $\sigma : \mathbb{K}\longrightarrow \mathbb{R}^n$ by
$$\varphi_\beta(x)=\left(\sqrt{\beta_1}\sigma_1(x),\ldots,\sqrt{\beta_n}\sigma_n(x)\right)$$
for any $x\in \mathbb{K}$. This definition can be generalized for any number field $\mathbb{K}$ (see \cite{boutros}). So, the set $\varphi_\beta(M)$ is a lattice of rank $n$ in $\mathbb{R}^n$ called algebraic lattice. It has volume equal to $\sqrt{|d_\mathbb{K}N_\mathbb{K}(\beta)|}N(M)$, where $d_\mathbb{K}$ denotes the discriminant of the field $\mathbb{K}$, $N_\mathbb{K}(\beta)$ denotes the algebraic norm of $\beta$ in the extension $\mathbb{K}/\mathbb{Q}$ and $N(M)$ denotes the index $[\mathcal{O}_\mathbb{K}:M]$. Besides that, $\varphi_\beta(M)$ has maximum diversity and minimum distance product given by $d_{p,min}(\varphi_\beta(M))=\sqrt{N_\mathbb{K}(\beta)}\min_{0\neq x\in M} |N_\mathbb{K}(x)|$. When $M$ is a principal ideal the lattice $\varphi_\beta(M)$ has minimum product distance given by $d_{p,min}(\varphi_\beta(M))=vol(\varphi_\beta(M))/\sqrt{d_\mathbb{K}}$. More about algebraic lattices can be seen in \cite{grasi2}.

The purpose of this work is obtain $D_n$ for any $n>1$ via totally real number fields. In second analysis, we want to calculate a more explicit expression for the minimum product distance of $D_n$ made by that construction when this is possible.

\section{Algebraic lattices $\mathbb{Z}^n$ and $D_n$, for any odd number $n>1$} \label{sec1}

Consider $n$ a odd number bigger than $1$. Due to Dirichlet Theorem (\cite{serre}, Chapter 3, Lemma 3), there exists a prime number $p$ such that $p\equiv 1~(mod~n)$. Denote the $p$-th primitive root of unity $e^{\frac{2i\pi}{p}}$ by $\zeta_p$. The cyclotomic extension $\mathbb{Q}(\zeta_p)/\mathbb{Q}$ has cyclic Galois group generated by $\sigma$ defined by $\sigma(\zeta_p)=\zeta_p^r$, in which $r$ is a primitive element of the field $(\mathbb{Z}/p\mathbb{Z})^*$ (that is, $r$ is an element such that its lower power $j>0$ satisfying $r^j\equiv 1~(mod~p)$ is $p-1$).

The subgroup $H=\langle \sigma^n \rangle$ of $Gal(\mathbb{Q}(\zeta_p)/\mathbb{Q})$ has a subfield of $\mathbb{Q}(\zeta_p)$ as fixed field, which we denote by $\mathbb{K}$, that is,
\begin{equation}\label{eq4}
\mathbb{K}=\{ y\in\mathbb{Q}(\zeta_p)~:~\sigma^n(y)=y\}.
\end{equation}
The degree of $\mathbb{K}$ is $n$. Besides, $\mathbb{K}$ is contained in the maximal real subfield $\mathbb{Q}(\zeta_p+\zeta_p^{-1})$. Then, $\mathbb{K}$ is a totally real number field.

Consider in $\mathbb{Q}(\zeta_p)$ the element
$$\alpha = \prod_{j=0}^{m-1} \left(1-\zeta_p^{r^j}\right)$$
in which $m=(p-1)/2$. Since $p$ is prime and $r<p$ then $mdc(r-1,p)=1$ and, consequently, there exists an integer $\lambda$ satisfying $\lambda(r-1)\equiv 1~(mod~p)$. Now, consider also in $\mathbb{Q}(\zeta_p)$ the element
$$z=\zeta_p^\lambda\alpha(1-\zeta_p).$$
Note that $z$ is an algebraic integer. Because of this, the element
$$x=Tr_{\mathbb{Q}(\zeta_p):\mathbb{K}}(z)=\sum_{j=1}^\frac{p-1}{k} \sigma^{jk}(z)$$
is an element belonging to $\mathcal{O}_\mathbb{K}$.

\begin{lema}[\cite{bayer-oggier}, lemams 3 and 4] The following equalities are true: \label{lema1}
\begin{enumerate}
	\item[a)] $\sigma(\alpha)=-\zeta_p^{p-1}\alpha$
	\item[b)] $\sigma(\zeta_p^\lambda \alpha) = -\zeta_p^\lambda \alpha$
	\item[c)] $(\zeta_p^\lambda \alpha)^2 = (-1)^m p$
\end{enumerate}
\end{lema}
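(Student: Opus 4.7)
The plan is to treat the three parts in order, using only the defining properties of $r$, $\lambda$ and $\alpha$, together with the standard identity $\prod_{k=1}^{p-1}(1-\zeta_p^k)=p$ obtained by evaluating the cyclotomic polynomial $\Phi_p(x)=1+x+\cdots+x^{p-1}$ at $x=1$.

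For part (a), I would apply $\sigma$ termwise to the product definition of $\alpha$. Since $\sigma(\zeta_p)=\zeta_p^r$, we get $\sigma(\alpha)=\prod_{j=1}^{m}\bigl(1-\zeta_p^{r^j}\bigr)$, which is $\alpha$ with the $j=0$ factor $(1-\zeta_p)$ removed and the $j=m$ factor $(1-\zeta_p^{r^m})$ inserted. Because $r$ has order $p-1=2m$ in $(\mathbb{Z}/p\mathbb{Z})^*$, we have $r^m\equiv -1\pmod p$, so $\zeta_p^{r^m}=\zeta_p^{-1}$. The elementary computation $1-\zeta_p^{-1}=-\zeta_p^{-1}(1-\zeta_p)$ then yields $\sigma(\alpha)=-\zeta_p^{-1}\alpha=-\zeta_p^{p-1}\alpha$.

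Part (b) is a direct consequence: applying $\sigma$ multiplicatively gives $\sigma(\zeta_p^{\lambda}\alpha)=\zeta_p^{r\lambda}\cdot(-\zeta_p^{p-1}\alpha)=-\zeta_p^{\,r\lambda+p-1}\alpha$, so the claim reduces to $r\lambda+p-1\equiv\lambda\pmod p$, which is exactly the defining congruence $\lambda(r-1)\equiv 1\pmod p$.

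Part (c) is the most delicate. I would split the product $\prod_{j=0}^{2m-1}(1-\zeta_p^{r^j})$ into the two halves $j<m$ and $j\ge m$. The whole product equals $p$ because as $j$ ranges over $0,\dots,2m-1$, the residues $r^j$ run through all of $(\mathbb{Z}/p\mathbb{Z})^*$. For the second half, $r^{m+j}\equiv -r^j\pmod p$, so each factor $1-\zeta_p^{-r^j}$ can be rewritten as $-\zeta_p^{-r^j}(1-\zeta_p^{r^j})$. Collecting signs and exponents, the second-half product equals $(-1)^m\zeta_p^{-S}\alpha$, where $S=\sum_{j=0}^{m-1}r^j$. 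Hence $(-1)^m\zeta_p^{-S}\alpha^2=p$, i.e.\ $\alpha^2=(-1)^m\zeta_p^{S}\,p$, so $(\zeta_p^{\lambda}\alpha)^2=(-1)^m\zeta_p^{\,2\lambda+S}\,p$.

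It remains to check $2\lambda+S\equiv 0\pmod p$, and this is where a tiny arithmetic trick is needed. From the geometric sum, $S=(r^m-1)/(r-1)$; since $r^m\equiv -1\pmod p$, we get $S\equiv -2/(r-1)\pmod p$, while $\lambda\equiv 1/(r-1)\pmod p$ by definition, so $2\lambda+S\equiv 0\pmod p$ as needed. I expect this last verification together with the bookkeeping of signs and exponents in the splitting of the product to be the main obstacle; everything else is a routine combination of the Galois action on $\zeta_p$ and the identity $1+\zeta_p+\cdots+\zeta_p^{p-1}=0$.
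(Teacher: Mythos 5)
Your proof is correct: part (a) follows from $r^m\equiv -1\pmod p$ and $1-\zeta_p^{-1}=-\zeta_p^{-1}(1-\zeta_p)$, part (b) from the defining congruence $\lambda(r-1)\equiv 1\pmod p$, and in part (c) the splitting of $\prod_{k=1}^{p-1}(1-\zeta_p^k)=p$ into the two half-products together with $S\equiv -2\lambda\pmod p$ closes the argument. The paper itself gives no proof of this lemma (it is quoted from Lemmas 3 and 4 of the cited reference), and your computation is exactly the standard verification along those lines, so there is nothing to add.
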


\begin{lema}[\cite{elia}, appendix II]\label{lema2} $Tr_\mathbb{K}(x^2)=p^2$ and $Tr_\mathbb{K}(x\sigma^j(x))=0$ if $j\neq 0$.
\end{lema}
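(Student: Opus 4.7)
The plan is to reduce both identities to computations of the type $Tr_{\mathbb{Q}(\zeta_p)/\mathbb{Q}}(z\,\sigma^k(z))$ for suitable integers $k$, then evaluate those traces using Lemma \ref{lema1} together with the elementary identity $Tr_{\mathbb{Q}(\zeta_p)/\mathbb{Q}}(\zeta_p^a)=-1$ when $p\nmid a$ and $p-1$ otherwise. Writing $L=\mathbb{Q}(\zeta_p)$ and $M=(p-1)/n$, and noting that $Gal(L/\mathbb{Q})$ is abelian (so $H=\langle\sigma^n\rangle$ is normal and $\sigma$ commutes with every element of $H$), one gets $\sigma^j(x)=Tr_{L/\mathbb{K}}(\sigma^j(z))$. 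The projection formula for the trace then yields
\begin{equation*}
Tr_{\mathbb{K}/\mathbb{Q}}\bigl(x\,\sigma^j(x)\bigr)=Tr_{L/\mathbb{Q}}\bigl(z\cdot\sigma^j(x)\bigr)=\sum_{t=0}^{M-1}Tr_{L/\mathbb{Q}}\bigl(z\,\sigma^{j+tn}(z)\bigr),
\end{equation*}
which unifies both statements into the single task of evaluating $Tr_{L/\mathbb{Q}}(z\,\sigma^k(z))$ as $k$ varies.

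For a single such term, parts (b) and (c) of Lemma \ref{lema1} give $\sigma^k(\zeta_p^\lambda\alpha)=(-1)^k\zeta_p^\lambda\alpha$ and $(\zeta_p^\lambda\alpha)^2=(-1)^m p$, whence
\begin{equation*}
z\,\sigma^k(z)=(-1)^{k+m}\,p\,(1-\zeta_p)(1-\zeta_p^{r^k}).
\end{equation*}
Expanding the product and applying the elementary trace formula shows that $Tr_{L/\mathbb{Q}}\bigl((1-\zeta_p)(1-\zeta_p^{r^k})\bigr)=p$ except when $r^k\equiv-1\pmod p$, i.e.\ $k\equiv m\pmod{p-1}$, in which case $1+r^k$ is divisible by $p$ and the trace jumps to $2p$.

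Assembling the sums finishes the proof. For $j\in\{1,\ldots,n-1\}$ the indices $k=j+tn$ are never divisible by $n$, hence never congruent to $m=nM/2$ modulo $p-1$, so every summand equals $(-1)^{t+j+m}p^2$ and the total carries the vanishing factor $\sum_{t=0}^{M-1}(-1)^t=0$ (which relies on $M$ being even, a consequence of $n$ odd and $p-1$ even). For $j=0$ exactly one value $t=M/2$ hits the exceptional doubled term $2p^2$, while the remaining signs combine (using $(-1)^m=(-1)^{M/2}$ since $m=nM/2$ and $n$ is odd) to contribute $-p^2$, giving the stated total $p^2$. The main delicate step I anticipate is this last bookkeeping: making sure the parities of $m$ and $M/2$ line up so that the doubled contribution at $t=M/2$ and the alternating leftover combine exactly to $p^2$ rather than some sign-ambiguous multiple of it.
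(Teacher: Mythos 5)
Your proof is correct. Note that the paper offers no proof of this lemma at all: it is quoted from \cite{elia} (Appendix II), so your argument supplies a self-contained verification rather than an alternative to something in the text. Your route is the standard one underlying the cited result: reduce via the projection formula to the cyclotomic traces $Tr_{\mathbb{Q}(\zeta_p)/\mathbb{Q}}(z\,\sigma^{k}(z))$ with $k=j+tn$, use Lemma \ref{lema1}(b),(c) to get $z\,\sigma^{k}(z)=(-1)^{k+m}p\,(1-\zeta_p)(1-\zeta_p^{r^{k}})$, and evaluate the elementary traces, with the only exceptional term occurring when $r^{k}\equiv-1\pmod p$, i.e.\ $k\equiv m\pmod{p-1}$. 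All the delicate points check out: since $n$ is odd and $p-1$ even, $M=(p-1)/n$ is even and $m=nM/2$ is a multiple of $n$, so for $j\in\{1,\dots,n-1\}$ no index $k=j+tn$ is congruent to $m$ modulo $p-1$ (congruence modulo $p-1$ implies congruence modulo $n$ because $n\mid p-1$), and the alternating sum $\sum_{t=0}^{M-1}(-1)^{t}=0$ kills those traces; for $j=0$ the unique exceptional index is $t=M/2$, and the parity identity $(-1)^{m}=(-1)^{M/2}$ makes the doubled term $+2p^{2}$ and the leftover alternating part $-p^{2}$, giving $p^{2}$ exactly as claimed (the degenerate subcase $k=0$, where $r^{k}\equiv 1$, still yields trace $p$, so it causes no trouble). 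This is a complete and correct proof of the lemma the paper only cites.
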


\begin{teorema} It is ortoghonal the matrix \label{teorema6}
	$$G=\frac{1}{p}\left(\begin{array}{ccccc}
	x & \sigma(x) & \ldots & \sigma^{n-2}(x) & \sigma^{n-1}(x)\\
	\sigma(x) & \sigma^2(x) & \ldots & \sigma^{n-1}(x) & x\\
	\sigma^2(x) & \sigma^3(x) & \ldots & x & \sigma(x)\\
	\vdots & \vdots & \vdots & \vdots & \vdots\\
	\sigma^{n-1}(x) & x & \ldots & \sigma^{n-3}(x) & \sigma^{n-2}(x)
	\end{array}\right).$$
that is, $GG^T=G^TG=I_n$
\end{teorema}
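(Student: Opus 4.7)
The plan is to verify $GG^{T}=I_{n}$ directly, recognizing that each inner product of rows of $G$ is, up to the scalar $1/p^{2}$, a trace from $\mathbb{K}$ to $\mathbb{Q}$. The orthogonality $G^{T}G=I_{n}$ then follows automatically because $G$ is square.

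First I would index rows and columns from $0$ to $n-1$ and note that the $(i,j)$-entry of $G$ is $\tfrac{1}{p}\sigma^{i+j}(x)$, where the exponent is read modulo $n$. This is legitimate because $\sigma^{n}$ fixes $\mathbb{K}$ (by the very definition of $\mathbb{K}$ in equation~(\ref{eq4})) and $x\in\mathcal{O}_{\mathbb{K}}$, hence $\sigma^{n}(x)=x$. Consequently every row of $G$ is a cyclic shift of the tuple $(x,\sigma(x),\ldots,\sigma^{n-1}(x))/p$, which in particular makes the sum below range over a full orbit of $\langle\sigma\rangle$ acting on $\mathbb{K}$.

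Next I would compute, for each pair $0\le i,j\le n-1$,
\begin{equation*}
(GG^{T})_{ij}=\frac{1}{p^{2}}\sum_{k=0}^{n-1}\sigma^{i+k}(x)\,\sigma^{j+k}(x)=\frac{1}{p^{2}}\sum_{k=0}^{n-1}\sigma^{k}\!\left(x\,\sigma^{j-i}(x)\right)=\frac{1}{p^{2}}\,Tr_{\mathbb{K}}\!\left(x\,\sigma^{j-i}(x)\right),
\end{equation*}
where the last equality uses that $\{\sigma^{0},\sigma^{1},\ldots,\sigma^{n-1}\}$ is a complete set of representatives of $Gal(\mathbb{K}/\mathbb{Q})$. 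Now I would invoke Lemma~\ref{lema2}: for $i=j$ it gives $(GG^{T})_{ii}=\tfrac{1}{p^{2}}Tr_{\mathbb{K}}(x^{2})=\tfrac{p^{2}}{p^{2}}=1$; for $i\neq j$ the exponent $j-i$ is not $\equiv 0\pmod{n}$, so $\sigma^{j-i}$ is a nontrivial automorphism and $Tr_{\mathbb{K}}(x\,\sigma^{j-i}(x))=0$, hence $(GG^{T})_{ij}=0$. This yields $GG^{T}=I_{n}$; since $G$ is a square real matrix, $G^{T}G=I_{n}$ follows immediately.

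There is no real obstacle in this proof: it is a direct calculation that relies entirely on Lemma~\ref{lema2}. The only point requiring a moment of care is the justification that the index shifts in the sum may be read modulo $n$ (so that the sum indeed produces the full trace on $\mathbb{K}$), which is exactly the content of $\sigma^{n}$ acting as the identity on $\mathbb{K}$.
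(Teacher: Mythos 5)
Your proposal is correct and is essentially the paper's argument: the paper simply states that the theorem ``follows directly from Lemma~\ref{lema2},'' and your computation of $(GG^{T})_{ij}=\tfrac{1}{p^{2}}Tr_{\mathbb{K}}\bigl(x\,\sigma^{j-i}(x)\bigr)$ is exactly the calculation that justification leaves implicit. The care you take with reading indices modulo $n$ via $\sigma^{n}(x)=x$ is a worthwhile detail the paper omits, but it is not a different route.
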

\begin{proof}
This follows directly from the Lemma \ref{lema2}.
\end{proof}

The above proposition allows us to construct the algebraic lattice $\mathbb{Z}^n$ through the twisted homomorphism with $\beta=1/p^2$ and with the $\mathbb{Z}$-module
\begin{equation} \label{eq3}
I=\langle x,\sigma(x),\ldots, \sigma^{n-1}(x)\rangle_\mathbb{Z}.
\end{equation}

%stop
In turn, the following theorem presents the construction of the algebraic lattice $D_n$ through a $\mathbb{Z}$-module inside the field $\mathbb{K}$.

\begin{teorema} Consider $\beta=1/p^2$ and $M$ the $\mathbb{Z}$-module generated by \label{teorema1}
$$\{x+\sigma(x),x-\sigma(x),\sigma(x)-\sigma^2(x),\ldots,\sigma^{n-2}(x)-\sigma^{n-1}(x)\}.$$
Thus, the algebraic lattice $\sigma_\beta(M)$ is a rotated version of the lattice $D_n$.
\end{teorema}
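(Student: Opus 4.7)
The plan is to use Theorem~\ref{teorema6} to recognize that $\varphi_\beta$ (the $\sigma_\beta$ of the statement) sends the Galois conjugates of $x$ to an orthonormal basis of $\mathbb{R}^n$. Concretely, set $v_i=\varphi_\beta(\sigma^i(x))$ for $i=0,1,\ldots,n-1$; the rows of the matrix $G$ in Theorem~\ref{teorema6} are precisely the coordinate vectors of the $v_i$, so the identity $GG^T=I_n$ is exactly the statement that $\{v_0,\ldots,v_{n-1}\}$ is an orthonormal basis of $\mathbb{R}^n$. In particular, $\varphi_\beta$ carries the module $I$ from (\ref{eq3}) onto a rotated copy of $\mathbb{Z}^n$, which is the backbone of the argument.

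By $\mathbb{Z}$-linearity of $\varphi_\beta$, the given generating set of $M$ is mapped to
$$v_0+v_1,\ v_0-v_1,\ v_1-v_2,\ \ldots,\ v_{n-2}-v_{n-1}.$$
Let $T\colon\mathbb{R}^n\to\mathbb{R}^n$ be the orthogonal transformation defined by $T(v_i)=e_{i+1}$ (well defined and isometric because it sends one orthonormal basis to another). Then $T(\varphi_\beta(M))$ is the lattice $\Lambda\subset\mathbb{R}^n$ generated by
$$e_1+e_2,\ e_1-e_2,\ e_2-e_3,\ \ldots,\ e_{n-1}-e_n.$$

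The remaining step is to identify $\Lambda$ with $D_n$. Each listed generator has coordinate sum equal to $0$ or $2$, so $\Lambda\subseteq D_n$. For the reverse inclusion I would compare volumes: adding the first two generators gives $(2,0,\ldots,0)$, after which the remaining rows $e_i-e_{i+1}$ form a bidiagonal block whose determinant is $\pm 1$; expanding shows that the generator matrix of $\Lambda$ has determinant $\pm 2$, whence $\mathrm{vol}(\Lambda)=2=\mathrm{vol}(D_n)$ and therefore $\Lambda=D_n$. If the orthogonal map $T$ happens to have determinant $-1$, one replaces $e_1$ with $-e_1$ in the target (a symmetry of $D_n$) so as to upgrade $T$ to a proper rotation, yielding a rotated copy of $D_n$ as claimed.

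The main obstacle is bookkeeping rather than deep difficulty: one must carefully track two successive orthogonal changes of basis (from $\{\sigma^i(x)\}$ to $\{v_i\}$ via $\varphi_\beta$, and from $\{v_i\}$ to $\{e_{i+1}\}$ via $T$) and ensure that the index computation is exact, so that $\Lambda$ is all of $D_n$ and not a proper sublattice of index $2$. The former step rests on Theorem~\ref{teorema6}, while the latter reduces to the determinant computation above, so no essentially new technical input beyond what is already in the excerpt should be needed.
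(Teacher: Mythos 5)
Your proposal is correct and follows essentially the same route as the paper: the paper takes the standard generator matrix of $D_n$ (whose rows are $\pm e_1-e_2$, $e_i-e_{i+1}$, up to sign the same generators you use), multiplies it by the orthogonal matrix $G$ of Theorem \ref{teorema6}, and reads off that the resulting rows are the $\varphi_\beta$-images of the generators of $M$. You merely run the same orthogonal identification in the opposite direction and add the (welcome, but routine) verifications that the integer lattice generated by $e_1+e_2,e_1-e_2,e_2-e_3,\ldots,e_{n-1}-e_n$ is exactly $D_n$ via the volume computation, and that an improper orthogonal map can be corrected by a symmetry of $D_n$.
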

\begin{proof} A generator matrix of $D_n$ is given by
\begin{equation}\label{eq6}
\left(\begin{matrix}
-1 & -1 & 0 & \ldots & 0 & 0\\
1 & -1 & 0 & \ldots & 0 & 0\\
0 & 1 & -1 & \ldots & 0 & 0\\
\vdots & \vdots & \vdots & \ddots & \vdots & \vdots\\
0 & 0 & 0 & \ldots & 1 & -1
\end{matrix}\right).
\end{equation}
Multiplying the above matrix by the ortoghonal matrix $G$, we obtain
$$\frac{1}{p}\left(\begin{matrix}
-x-\sigma(x) & -\sigma(x)-\sigma^2(x) & \ldots &  -\sigma^{n-1}(x)-x\\
x-\sigma(x) & \sigma(x)-\sigma^2(x) &  \ldots & \sigma^{n-1}(x)-x\\
\vdots & \vdots & \ddots &  \vdots\\
\sigma^{n-2}(x)-\sigma^{n-1}(x) & \sigma^{n-1}(x)-x & \ldots & \sigma^{n-3}(x)-\sigma^{n-2}(x)
\end{matrix}\right)$$
that is a generator matrix of the lattice $\varphi_\beta(M)$, a rotated version of $D_n$.
\end{proof}

The lattice obtained above is a $D_n$-rotated, that is, a equivalent version of the lattice $D_n$. Since equivalent lattices have the same center density, the lattice $\varphi_\beta(M)$ has the better known center density in the dimensions $n=3$, $n=5$ and $n=37$, for example.

In the thesis \cite{grasi} and in the paper \cite{grasi2}, the authors produce $D_n$-rotated lattices for $n=\frac{p-1}{2}$, in which $p$ is a prime number. In this work we get $D_n$ for others values of $n$ not considered in the cited references above, like $n=7$, for example, since $2n+1=15$ is not a prime number.

\begin{ex}\label{ex1} Let's construct a rotated version of $\mathbb{Z}^7$ and of its sublattice $D_7$ having maximum diversity as proposed in this section. Consider $p=29$, that is congruent to $1$ module $7$. In this case, $r=2$ and $\lambda=1$. Thus,
$$\alpha = \zeta_{29}^{27} - \zeta_{29}^{26} - \zeta_{29}^{25} + \zeta_{29}^{24} + \zeta_{29}^{23} + \zeta_{29}^{22} + \zeta_{29}^{21} - \zeta_{29}^{20} + \zeta_{29}^{19} - \zeta_{29}^{18} - \zeta_{29}^{17} - \zeta_{29}^{16} +$$ $$+\zeta_{29}^{15} - \zeta_{29}^{14} - \zeta_{29}^{13} + \zeta_{29}^{12} - \zeta_{29}^{11} - \zeta_{29}^{10} - \zeta_{29}^{9} + \zeta_{29}^{8} - \zeta_{29}^{7} + \zeta_{29}^{6} + \zeta_{29}^{5} + \zeta_{29}^{4} + \zeta_{29}^{3} - \zeta_{29}^{2} - \zeta_{29} + 1$$
$$z=-2 \zeta_{29}^{27} - 4 \zeta_{29}^{26} - 2 \zeta_{29}^{25} - 2 \zeta_{29}^{24} - 2 \zeta_{29}^{23} - 4 \zeta_{29}^{21} - 2 \zeta_{29}^{19} - 2 \zeta_{29}^{18} - 4 \zeta_{29}^{17} - 2 \zeta_{29}^{15} - 4 \zeta_{29}^{14} - 2 \zeta_{29}^{12} -$$ $$-2\zeta_{29}^{11} - 4 \zeta_{29}^{10} - 4 \zeta_{29}^{8} - 2 \zeta_{29}^{7} - 2 \zeta_{29}^{6} - 2 \zeta_{29}^{5} - 2 \zeta_{29}^{3} - 4 \zeta_{29}^{2} - \zeta_{29} - 3$$
and
$$x=-3 \zeta_{29}^{27} - \zeta_{29}^{26} - \zeta_{29}^{25} - 3 \zeta_{29}^{24} - 3 \zeta_{29}^{23} - \zeta_{29}^{22} - \zeta_{29}^{21} - \zeta_{29}^{20} - \zeta_{29}^{19} + 3 \zeta_{29}^{18} + 3 \zeta_{29}^{16} - 3 \zeta_{29}^{15} - 3 \zeta_{29}^{14} +$$
$$+3 \zeta_{29}^{13} + 3 \zeta_{29}^{11} - \zeta_{29}^{10} - \zeta_{29}^{9} - \zeta_{29}^{8} - \zeta_{29}^{7} - 3 \zeta_{29}^{6} - 3 \zeta_{29}^{5} - \zeta_{29}^{4} - \zeta_{29}^{3} - 3 \zeta_{29}^{2} - 5.$$
So, the generator matrix of $\mathbb{Z}^7$ is given by
$$\frac{1}{29}\left(\begin{array}{ccccccc}
-19.747... & 4.729... & -13.016... & 2.244... & 2.387... & 7.991... & -13.588...\\
4.729... & -13.016... & 2.244... & 2.387... & 7.991... & -13.588... & -19.747...\\
-13.016... & 2.244... & 2.387... & 7.991... & -13.588... & -19.747... & 4.729...\\
2.244... & 2.387... & 7.991... & -13.588... & -19.747... & 4.729...& -13.016...\\
2.387... & 7.991... & -13.588... & -19.747... & 4.729...& -13.016... & 2.244...\\
7.991... & -13.588... & -19.747... & 4.729...& -13.016... & 2.244... & 2.387...\\
-13.588... & -19.747... & 4.729...& -13.016... & 2.244... & 2.387... & 7.991...
\end{array}\right)$$
while the generator matrix of $D_7$ is given by
$$\frac{1}{29}\left(\begin{array}{ccccccc}
15.017... & 8.286... & 10.772... & -4.631... & -10.378... & 5.597... & 33.335...\\
-24.477... & 17.746... & -15.260... & -0.143... & -5.603... & 21.579... & 6.158...\\
17.746... & -15.260... & -0.143... & -5.603... & 21.579... & 6.158... & -24.4777...\\
-15.260... & -0.143... & -5.603... & 21.579... & 6.158... & -24.4777... & 17.746...\\
-0.143... & -5.603... & 21.579... & 6.158... & -24.4777... & 17.746... & -15.260...\\
-5.603... & 21.579... & 6.158... & -24.4777... & 17.746... & -15.260... & -0.143...\\
21.579... & 6.158... & -24.4777... & 17.746... & -15.260... & -0.143... & -5.603...
\end{array}\right)$$
\end{ex}

Other advantage of the above construction (Theorem \ref{teorema1}) is the fact that the lattice $\varphi_\beta(M)\simeq D_n$ has maximum diversity, because it was obtained from a totally real number field $\mathbb{K}$. Since it has maximum diversity, we can calculate its minimum product distance in order to make this construction applicable to Rayleigh fading channels. The following section will be dedicated to the study of this value.

\section{Minimum product distance of $\mathbb{Z}^n$ and $D_n$, for any odd number $n>1$}\label{sec2}

When $N$ is a principal ideal, the minimum product distance of a lattice $\varphi_\beta(N)$ having maximum diversity depends only on the determinant of the lattice and on the discriminant of the field used to construct it. Therefore, next two results give conditions to calculate the minimum product distance of $\mathbb{Z}^n$ constructed in the previous section, with $n$ odd.

\begin{teorema}\label{teorema14} The $\mathbb{Z}$-module $I$ given in \ref{eq3} is an ideal of $\mathcal{O}_\mathbb{K}$.\end{teorema}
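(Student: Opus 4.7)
The plan is to identify $I$ explicitly as a power of the unique prime ideal of $\mathcal{O}_\mathbb{K}$ above $p$: I will show $I = \mathfrak{p}^{(n+1)/2}$, where $\mathfrak{p}$ is that prime (unique because $p$ is totally ramified in $\mathbb{Q}(\zeta_p)$, hence in every subfield). A power of a prime ideal is obviously an ideal, which settles the claim.

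First I locate the generator $x$ $P$-adically in $\mathbb{Q}(\zeta_p)$, with $P=(1-\zeta_p)$ the unique prime of $\mathcal{O}_{\mathbb{Q}(\zeta_p)}$ above $p$. Lemma \ref{lema1}(c) gives $(\zeta_p^\lambda\alpha)^2=\pm p$ while $v_P(p)=p-1$, so $v_P(\zeta_p^\lambda\alpha)=(p-1)/2$ and hence $v_P(z)=v_P(\zeta_p^\lambda\alpha)+v_P(1-\zeta_p)=(p+1)/2$. Since $P$ is fixed by the whole Galois group of $\mathbb{Q}(\zeta_p)/\mathbb{Q}$, the valuation $v_P$ is Galois-invariant, so every summand $\sigma^{jn}(z)$ also has $v_P$-valuation $(p+1)/2$, and the ultrametric inequality gives $v_P(x)\ge (p+1)/2$.

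Next I descend this bound to $\mathbb{K}$. Setting $\mathfrak{p}=P\cap\mathcal{O}_\mathbb{K}$, the ramification index is $e(P|\mathfrak{p})=(p-1)/n$, so $v_P(x)=((p-1)/n)\,v_\mathfrak{p}(x)$ must be a multiple of $(p-1)/n$. Writing $p-1=mn$ with $m\ge 2$ (forced because $n$ odd makes $n+1$ even, ruling out $p=n+1$), one computes $n(p+1)/(2(p-1))=n/2+1/m$; since $n$ is odd and $1/m\le 1/2$, the ceiling of this rational equals $(n+1)/2$. Therefore $v_\mathfrak{p}(x)\ge (n+1)/2$, and by Galois-invariance of $\mathfrak{p}$ the same bound holds for every $\sigma^i(x)$, giving $I\subseteq\mathfrak{p}^{(n+1)/2}$.

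To upgrade containment to equality I compare $\mathbb{Z}$-indices in $\mathcal{O}_\mathbb{K}$. The conductor-discriminant formula applied to the degree-$n$ abelian extension $\mathbb{K}/\mathbb{Q}$ of conductor $p$ yields $d_\mathbb{K}=p^{n-1}$, whence $[\mathcal{O}_\mathbb{K}:\mathfrak{p}^{(n+1)/2}]=N(\mathfrak{p})^{(n+1)/2}=p^{(n+1)/2}$. On the other hand, Theorem \ref{teorema6} says $\varphi_\beta(I)=\mathbb{Z}^n$ has volume $1$, and the volume formula recalled in Section \ref{sec0} reads $1=\sqrt{|d_\mathbb{K}|/p^{2n}}\,[\mathcal{O}_\mathbb{K}:I]$, forcing $[\mathcal{O}_\mathbb{K}:I]=p^{(n+1)/2}$ as well. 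Containment with equal finite indices forces $I=\mathfrak{p}^{(n+1)/2}$. The main obstacle is the ceiling calculation of step two: it is the only place where the parity of $n$ (together with $p\ge 2n+1$) intervenes, and it is precisely what upgrades the ramification-induced divisibility constraint $v_P(x)\in((p-1)/n)\mathbb{Z}$ into the sharp bound $v_\mathfrak{p}(x)\ge(n+1)/2$ needed to match the index computed from the volume formula.
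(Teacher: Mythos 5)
Your proof is correct, but it takes a genuinely different route from the paper. You identify $I$ explicitly as $\mathfrak{p}^{(n+1)/2}$, where $\mathfrak{p}$ is the unique (totally ramified) prime of $\mathcal{O}_\mathbb{K}$ above $p$: the valuation count $v_P(z)=(p+1)/2$ at $P=(1-\zeta_p)$, the Galois-invariance of $P$ and $\mathfrak{p}$, and the ceiling computation (valid since $m=(p-1)/n\geq 2$, as $n+1$ is even and hence not prime) give $I\subseteq\mathfrak{p}^{(n+1)/2}$, and the index count combining the orthogonality of $G$ (Theorem \ref{teorema6}, which gives $vol(\varphi_\beta(I))=1$ and in particular full rank of $I$) with $d_\mathbb{K}=p^{n-1}$ forces equality; all of these ingredients are available non-circularly, since $d_\mathbb{K}=p^{n-1}$ is an independent fact the paper itself invokes later in Corollary \ref{corol1}. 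The paper instead argues upstairs in $\mathbb{Z}[\zeta_p]$: a direct reindexing/telescoping computation with Lemma \ref{lema1}(b) shows that the $\mathbb{Z}$-module $J=\langle z,\sigma(z),\ldots,\sigma^{p-2}(z)\rangle_\mathbb{Z}$ is stable under multiplication by $\zeta_p$, hence is an ideal of $\mathbb{Z}[\zeta_p]$, and then descends via $I=Tr_{\mathbb{Q}(\zeta_p)/\mathbb{K}}(J)$, using $\mathcal{O}_\mathbb{K}$-linearity of the relative trace. The paper's argument is more elementary and self-contained (no valuation theory, no discriminant or volume input); yours buys strictly more information, namely the explicit identification $I=\mathfrak{p}^{(n+1)/2}$, which immediately yields $N(I)=p^{(n+1)/2}$ (consistent with $|N_\mathbb{K}(x)|=p^{(n+1)/2}$ in Corollary \ref{corol1}) and reformulates the hypothesis of Theorem \ref{teorema2} as the statement that $\mathfrak{p}^{(n+1)/2}$ is principal.
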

\begin{proof} Consider the $\mathbb{Z}$-module $J=\langle z,\sigma(z),\ldots,\sigma^{p-2}(z)\rangle_\mathbb{Z}$ in $\mathcal{O}_{\mathbb{Q}(\zeta_p)}=\mathbb{Z}[\zeta_p]$. We will see that $J$ is an ideal. For this, let $j=\sum_{i=0}^{p-2} a_i\sigma^i(z)$ be any element of $J$, where $a_i\in\mathbb{Z}$, $0\leq i\leq p-2$. We will show that $j\zeta_p \in J$. Of course, because of the item (b) of the Lemma \ref{lema1},
	$$j\zeta_p=\left(\sum_{i=0}^{p-2} a_i\sigma^i(z)\right)\zeta_p=\sum_{i=0}^{p-2} (-1)^ia_i\zeta_p\alpha(1-\zeta_p^{r^i})\zeta_p.$$
	Since $\{\zeta_p^{r^i}\}_{i=0}^{p-2}=\{\zeta_p^k\}_{k=1}^{p-1}$, we can reenumerate the above sum calling $b_k:=(-1)^ia_i$ such that $\zeta_p^{r^i}=\zeta_p^k$, for all $0\leq i\leq p-2$. Therefore,
	\begin{equation}\label{eq1}
	j\zeta_p=\zeta_p\alpha\sum_{k=1}^{p-1} b_k (1-\zeta_p^{k})\zeta_p
	\end{equation}
	Now, consider $c_1=-\sum_{k=1}^{p-1}b_k$ e $c_i=b_{i-1}$, $2\leq i\leq p-1$. Then:
	$$\sum_{i=1}^{p-1} c_i(1-\zeta_p^i)= -\sum_{k=1}^{p-1}b_k(1-\zeta_p) +\sum_{i=2}^{p-1} b_{i-1}(1-\zeta_p^i) =-b_{p-1}(1-\zeta_p)+ \sum_{i=1}^{p-2}b_i\left(-(1-\zeta_p)+(1-\zeta_p^{i+1}) \right)=$$
	$$=b_{p-1}(\zeta_p-1)+ \sum_{i=1}^{p-2}b_i\left(\zeta_p-\zeta_p^{i+1}\right)=b_{p-1}(1-\zeta_p^{p-1})\zeta_p+\sum_{i=1}^{p-2}b_i\left(1-\zeta_p^{i}\right)\zeta_p=\sum_{i=1}^{p-1}b_i\left(1-\zeta_p^{i}\right)\zeta_p.$$
	So, coming back on the equation \ref{eq1}, we have
	$$j\zeta_p=\zeta_p\alpha\sum_{i=1}^{p-1} c_i(1-\zeta_p^i)=\sum_{i=1}^{p-1} c_i\zeta_p\alpha(1-\zeta_p^i).$$
	Now, enumerate again the above sum putting the name $(-1)^kd_k$ for each term $c_i$ tal que $\zeta_p^i=\zeta_p^{r^k}$, $1\leq i \leq p-2$:
	$$j\zeta_p=\sum_{k=0}^{p-2} (-1)^kd_k\zeta_p\alpha(1-\zeta_p^{r^k})=\sum_{k=0}^{p-2}d_k\sigma^k\left(z\right).$$
	From this we conclude that $j\zeta_p\in J$ for all $j\in J$. By recurrence it follows that $j\zeta_p^k\in J$ for $0\leq k\leq p-2$. Since $\{\zeta_p^k\}_{k=0}^{p-2}$ is a $\mathbb{Z}$-basis for $\mathcal{O}_{\mathbb{Q}(\zeta_p)}$ then we can conclude that $j\mathcal{O}_{\mathbb{Q}(\zeta_p)}\subset J$, for all $j\in J$. Therefore, $J$ is an ideal in $\mathcal{O}_{\mathbb{Q}(\zeta_p)}$. In turn, observe that $I$ coincides with the ideal $Tr_{\mathbb{Q}(\zeta_p)/\mathbb{K}}(J)$, because $x=Tr_{\mathbb{Q}(\zeta_p)/\mathbb{K}}(z)$ and because for each $\sigma^i(z)$ of the $\mathbb{Z}$-basis of $J$ (since there exists $q$ and $r$ such that $i=qn+r$, $0\leq r<n$, and since $\sigma^n(x)=x$),
	$$Tr_{\mathbb{Q}(\zeta_p)/\mathbb{K}}(\sigma^i(z))=\sigma^i(x)=\sigma^{qn+r}(x)=\sigma^r(x)\in I.$$
	Therefore, $I$ is an ideal in $\mathcal{O}_\mathbb{K}$.
\end{proof}

Of the fact that $I$ is a principal ideal it follows that
\begin{equation}\label{eq2}
x\mathcal{O}_\mathbb{K}\subset I.
\end{equation}

\begin{teorema}\label{teorema2}
If $\sigma(x)/x\in\mathbb{Z}[\zeta_p]$ then $I$ is the principal ideal of $\mathcal{O}_\mathbb{K}$ generated by $x$.
\end{teorema}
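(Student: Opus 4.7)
The plan is to show the reverse inclusion to (\ref{eq2}), namely $I \subset x\mathcal{O}_\mathbb{K}$, since $x\mathcal{O}_\mathbb{K} \subset I$ already follows from the fact that $x \in I$ and $I$ is an ideal (Theorem \ref{teorema14}). Because $I = \langle x, \sigma(x), \ldots, \sigma^{n-1}(x) \rangle_\mathbb{Z}$, it is enough to prove that $\sigma^i(x) \in x\mathcal{O}_\mathbb{K}$ for every $0 \le i \le n-1$, i.e.\ that each ratio $\sigma^i(x)/x$ lies in $\mathcal{O}_\mathbb{K}$.

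First I would observe that the hypothesis $\sigma(x)/x \in \mathbb{Z}[\zeta_p] = \mathcal{O}_{\mathbb{Q}(\zeta_p)}$ already places $\sigma(x)/x$ in $\mathbb{K}$: indeed, since $\mathrm{Gal}(\mathbb{Q}(\zeta_p)/\mathbb{Q})$ is abelian the subgroup $H = \langle \sigma^n \rangle$ is normal, so $\sigma$ restricts to an automorphism of the fixed field $\mathbb{K}$, and in particular $\sigma(x) \in \mathbb{K}$. Combined with the standard equality $\mathcal{O}_\mathbb{K} = \mathbb{K} \cap \mathcal{O}_{\mathbb{Q}(\zeta_p)}$, the hypothesis gives $\sigma(x)/x \in \mathcal{O}_\mathbb{K}$.

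Next I would handle the higher powers of $\sigma$ by the telescoping identity
$$\frac{\sigma^i(x)}{x} = \prod_{j=0}^{i-1} \frac{\sigma^{j+1}(x)}{\sigma^{j}(x)} = \prod_{j=0}^{i-1} \sigma^{j}\!\left(\frac{\sigma(x)}{x}\right).$$
Since $\sigma$ acts as an automorphism on $\mathcal{O}_\mathbb{K}$, every factor $\sigma^{j}(\sigma(x)/x)$ lies in $\mathcal{O}_\mathbb{K}$, hence so does the product. Therefore $\sigma^i(x) \in x\mathcal{O}_\mathbb{K}$ for all $i$, which gives $I \subset x\mathcal{O}_\mathbb{K}$ and closes the proof with $I = x\mathcal{O}_\mathbb{K}$.

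The argument is essentially bookkeeping once the hypothesis is correctly interpreted; there is no serious obstacle. The only subtlety worth stating carefully is the initial step of descending the quotient $\sigma(x)/x$ from $\mathbb{Z}[\zeta_p]$ down to $\mathcal{O}_\mathbb{K}$, which hinges on the abelianness of $\mathrm{Gal}(\mathbb{Q}(\zeta_p)/\mathbb{Q})$ (so that $\sigma$ preserves $\mathbb{K}$) together with the standard fact that integrality is detected in the ambient ring of integers.
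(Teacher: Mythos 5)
Your proof is correct and follows essentially the same route as the paper: descend $u=\sigma(x)/x$ to $\mathcal{O}_\mathbb{K}$ via $\mathbb{K}\cap\mathbb{Z}[\zeta_p]=\mathcal{O}_\mathbb{K}$ (the paper does not spell out that $\sigma$ preserves $\mathbb{K}$, but it is the same point), deduce $I\subset x\mathcal{O}_\mathbb{K}$, and combine with $x\mathcal{O}_\mathbb{K}\subset I$ coming from Theorem \ref{teorema14}. Your explicit telescoping identity $\sigma^i(x)/x=\prod_{j=0}^{i-1}\sigma^j(u)$ is actually a cleaner, strictly correct version of the paper's display, which writes $\sigma^i(x)=\sigma^{i-1}(u)\,x$ where the full product of conjugates of $u$ is what is really meant.
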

\begin{proof}
Since $\sigma(x)/x\in \mathbb{Z}[\zeta_p]\cap \mathbb{K}$ then $u:=\sigma(x)/x\in\mathcal{O}_\mathbb{K}$. So $\sigma^i(u)\in\mathcal{O}_\mathbb{K}$ and, from this,
$$\{x,\sigma(x),\sigma^2(x),\ldots,\sigma^{n-1}(x)\}=\{x,ux,\sigma(u)x,\ldots,\sigma^{n-2}(u)x\}\subset x\mathcal{O}_\mathbb{K}$$
that is, $I\subset x\mathcal{O}_\mathbb{K}$. From this and from Equation \ref{eq2} follows that $x\mathcal{O}_\mathbb{K}=I$.
\end{proof}

\begin{corol} \label{corol1}
If $\sigma(x)/x\in\mathbb{Z}[\zeta_p]$ then the minimum product distance of the lattice $\varphi_\beta(I)$ (equivalent to $\mathbb{Z}^n$) is equal to $p^{\frac{1-n}{2}}.$ Besides that,
$|N_\mathbb{K}(x)|=p^{\frac{n+1}{2}}.$
\end{corol}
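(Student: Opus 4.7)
The plan is to combine Theorem \ref{teorema2} with the two general formulas for algebraic lattices coming from principal ideals, both of which are recorded in Section \ref{sec0}. Under the standing hypothesis, Theorem \ref{teorema2} already gives $I=x\mathcal{O}_\mathbb{K}$, so $I$ is principal and we can use the closed form
$$d_{p,\min}(\varphi_\beta(I))=\frac{\operatorname{vol}(\varphi_\beta(I))}{\sqrt{d_\mathbb{K}}}.$$
Since $\varphi_\beta(I)$ is, by Theorem \ref{teorema6} and the paragraph that follows it, a rotation of $\mathbb{Z}^n$, its volume equals $1$. Hence the first assertion is reduced to computing $d_\mathbb{K}$.

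The key computation is $d_\mathbb{K}=p^{n-1}$. Because $\mathbb{K}$ is the fixed field of $H=\langle\sigma^n\rangle$ inside the cyclotomic field $\mathbb{Q}(\zeta_p)$, it is the unique subfield of $\mathbb{Q}(\zeta_p)$ of degree $n$ and its Galois group over $\mathbb{Q}$ is cyclic of order $n$. The only rational prime ramifying in $\mathbb{Q}(\zeta_p)/\mathbb{Q}$ is $p$, and that extension is totally ramified at $p$; therefore every nontrivial character of $\operatorname{Gal}(\mathbb{K}/\mathbb{Q})$ has conductor exactly $p$. By the conductor-discriminant formula,
$$|d_\mathbb{K}|=\prod_{\chi}\mathfrak{f}(\chi)=p^{\,n-1},$$
so $\sqrt{d_\mathbb{K}}=p^{(n-1)/2}$ and the first formula gives $d_{p,\min}(\varphi_\beta(I))=p^{(1-n)/2}$.

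For the norm of $x$, I would use the volume formula
$$\operatorname{vol}(\varphi_\beta(I))=\sqrt{\bigl|d_\mathbb{K}\,N_\mathbb{K}(\beta)\bigr|}\cdot N(I),$$
together with $N(I)=[\mathcal{O}_\mathbb{K}:x\mathcal{O}_\mathbb{K}]=|N_\mathbb{K}(x)|$ (which holds because $I$ is principal) and $N_\mathbb{K}(\beta)=N_\mathbb{K}(1/p^2)=1/p^{2n}$. Substituting $d_\mathbb{K}=p^{n-1}$ yields
$$\sqrt{\bigl|d_\mathbb{K}\,N_\mathbb{K}(\beta)\bigr|}=p^{(n-1)/2}\cdot p^{-n}=p^{-(n+1)/2},$$
and setting the left-hand side of the volume formula equal to $1$ gives $|N_\mathbb{K}(x)|=p^{(n+1)/2}$, as required. (As a consistency check, the alternative formula $d_{p,\min}(\varphi_\beta(I))=\sqrt{N_\mathbb{K}(\beta)}\min_{0\ne y\in I}|N_\mathbb{K}(y)|=p^{-n}|N_\mathbb{K}(x)|$ then recovers $p^{(1-n)/2}$.)

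The main obstacle is establishing $d_\mathbb{K}=p^{n-1}$; this is the only non-bookkeeping ingredient, and it rests on the conductor-discriminant formula together with the fact that $p$ is totally ramified in $\mathbb{Q}(\zeta_p)/\mathbb{Q}$. Once that is in hand, the rest is a direct substitution into the two formulas already quoted in Section \ref{sec0}.
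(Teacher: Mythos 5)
Your proposal is correct and follows the same skeleton as the paper: Theorem \ref{teorema2} makes $I=x\mathcal{O}_\mathbb{K}$ principal, the principal-ideal closed form reduces everything to $d_\mathbb{K}$, and the lattice being a rotation of $\mathbb{Z}^n$ gives volume (determinant) $1$. The differences are in the two supporting sub-steps. For $d_\mathbb{K}=p^{n-1}$, the paper first shows by an intersection-of-cyclotomic-fields argument that $p$ is the conductor of $\mathbb{K}$ and then cites Corollary 4.2 of \cite{trajano}; you instead invoke the conductor--discriminant formula directly, using that $p$ is totally ramified in $\mathbb{Q}(\zeta_p)/\mathbb{Q}$ so every nontrivial character of $\mathrm{Gal}(\mathbb{K}/\mathbb{Q})$ has conductor $p$. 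Both are valid; your route is self-contained modulo a standard theorem, while the paper leans on a specific reference. For $|N_\mathbb{K}(x)|=p^{\frac{n+1}{2}}$, the paper uses the formula $d_{p,min}=\sqrt{N_\mathbb{K}(\beta)}\min_{0\neq y\in I}|N_\mathbb{K}(y)|$ together with the observation that the minimum is attained at the generator $x$, whereas you plug $N(I)=[\mathcal{O}_\mathbb{K}:x\mathcal{O}_\mathbb{K}]=|N_\mathbb{K}(x)|$ into the volume formula $\mathrm{vol}(\varphi_\beta(I))=\sqrt{|d_\mathbb{K}N_\mathbb{K}(\beta)|}\,N(I)=1$; both are formulas quoted in Section \ref{sec0} and both yield the same value, and your consistency check via the paper's route confirms the agreement. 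No gaps.
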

\begin{proof}
Due to the Theorem \ref{teorema2}, $I$ is the principal ideal in $\mathcal{O}_\mathbb{K}$ generated by $x$. Because it is a principal ideal, the Theorem 1 of \cite{bayer-oggier} implies that $d_{p,min}(\varphi_\beta(I))=\sqrt{D/d_\mathbb{K}}$
where $D$ is the determinant of the lattice and $d_\mathbb{K}$ is the discriminant of $\mathbb{K}$. Since the lattice is $\mathbb{Z}^n$-rotated then $D=1$. Now, note that the smallest cyclotomic field containing $\mathbb{K}$ is $\mathbb{Q}(\zeta_p)$. In fact, if there was a integer number $l$ such that $\mathbb{K}\subset\mathbb{Q}(\zeta_l)$ where $l<p$ then
$$\mathbb{K}\subset \mathbb{Q}(\zeta_p)\cap \mathbb{Q}(\zeta_l) = \mathbb{Q}(\zeta_{mdc(p,l)})=\mathbb{Q}(\zeta_1)=\mathbb{Q}\Longrightarrow \mathbb{K}=\mathbb{Q}$$
what is a contradiction because the degree of $\mathbb{K}$ is $n>1$. Because of this, we say that $p$ is the \textit{conductor} of $\mathbb{K}$. From this, due to \cite{trajano} (Corollary 4.2), we conclude that $d_\mathbb{K}=p^{n-1}$. Therefore,
$d_{p,min}(\varphi_\beta(I))=\sqrt{1/p^{n-1}}=p^{\frac{1-n}{2}}$. On the other hand, we know that the minimum product distance is equal to $\sqrt{N_\mathbb{K}(\beta)}\min_{0\neq y\in I} |N_\mathbb{K}(y)|$. If $\beta=1/p^2$ then $\sqrt{N_\mathbb{K}(\beta)}=(1/p)^n$. Besides that, since $I$ is principal generated by $x$, the value $\min_{0\neq y\in I} |N_\mathbb{K}(y)|$ is reached by $x$, that is,
$$|N_\mathbb{K}(x)|=\min_{0\neq y\in I}|N_\mathbb{K}(y)|=\frac{d_{p,min}(\varphi_\beta(I))}{\sqrt{N_\mathbb{K}(\beta)}}=p^{\frac{1-n}{2}}p^n=p^{\frac{n+1}{2}}.$$
\end{proof}

Observe that the Theorem \ref{teorema2} and the Corollary \ref{corol1} need the hypothesis that $\sigma(x)/x$ is an algebraic integer (belongs to $\mathbb{Z}[\zeta_p]$). The following Theorem guarantees that this is true when $\mathbb{K}$ is the maximal real cyclotomic subfield $\mathbb{Q}(\zeta_p+\zeta_p^{-1})$ in $\mathbb{Q}(\zeta_p)$. Note that this is a case treated in \cite{grasi2}.

\begin{teorema} If $(p-1)/n=2$ then $\sigma(x)/x \in \mathbb{Z}[\zeta_p]$. \label{teorema4}
\end{teorema}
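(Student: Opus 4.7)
The plan is to derive a closed form for $x$ under the hypothesis $(p-1)/n = 2$ and then compute the ratio $\sigma(x)/x$ explicitly. Since $(p-1)/n = 2$, the trace sum defining $x$ collapses to two terms: using $\sigma^{2n} = \sigma^{p-1} = \mathrm{id}$ we obtain $x = z + \sigma^n(z)$. Moreover, because $r$ is a primitive root modulo $p$, the element $r^n = r^{(p-1)/2}$ is the unique nontrivial square root of $1$ modulo $p$, so $r^n \equiv -1 \pmod p$.

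Next I would compute $\sigma^n(z)$ using Lemma \ref{lema1}(b). Iterating that identity, $\sigma^n(\zeta_p^\lambda \alpha) = (-1)^n \zeta_p^\lambda \alpha = -\zeta_p^\lambda \alpha$, the last equality because $n$ is odd (the standing assumption of Section \ref{sec1}). Also $\sigma^n(1-\zeta_p) = 1 - \zeta_p^{r^n} = 1 - \zeta_p^{-1}$ by the congruence above. Substituting back,
$$x \;=\; z + \sigma^n(z) \;=\; \zeta_p^\lambda \alpha \,\bigl[(1-\zeta_p) \;-\; (1-\zeta_p^{-1})\bigr] \;=\; \zeta_p^\lambda \alpha\,(\zeta_p^{-1} - \zeta_p).$$

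Finally, applying $\sigma$ once more and invoking Lemma \ref{lema1}(b) again gives $\sigma(x) = -\zeta_p^\lambda \alpha\,(\zeta_p^{-r} - \zeta_p^{r}) = \zeta_p^\lambda \alpha\,(\zeta_p^{r} - \zeta_p^{-r})$, so the factor $\zeta_p^\lambda \alpha$ cancels in the quotient and
$$\frac{\sigma(x)}{x} \;=\; \frac{\zeta_p^r - \zeta_p^{-r}}{\zeta_p^{-1} - \zeta_p} \;=\; -\zeta_p^{1-r}\cdot\frac{\zeta_p^{2r}-1}{\zeta_p^{2}-1} \;=\; -\zeta_p^{1-r}\bigl(1 + \zeta_p^{2} + \zeta_p^{4} + \cdots + \zeta_p^{2(r-1)}\bigr),$$
which is manifestly a $\mathbb{Z}$-linear combination of powers of $\zeta_p$ and therefore lies in $\mathbb{Z}[\zeta_p]$, as required.

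The only delicate point is the sign bookkeeping: the oddness of $n$ is exactly what forces $(-1)^n = -1$ and turns $(1-\zeta_p) + (-1)^n(1-\zeta_p^{-1})$ into the clean difference $\zeta_p^{-1}-\zeta_p$, so that $x$ factors through the quantity $\zeta_p - \zeta_p^{-1}$. Once that factorization is in hand, the integrality of $\sigma(x)/x$ reduces to the elementary cyclotomic identity $(\zeta_p^{2r}-1)/(\zeta_p^{2}-1) = \sum_{k=0}^{r-1}\zeta_p^{2k}$ and requires no further input.
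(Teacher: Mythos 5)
Your proof is correct and takes essentially the same route as the paper: reduce the trace to the two terms $x=z+\sigma^n(z)$, use Lemma \ref{lema1}(b) (and the oddness of $n$) to cancel the factor $\zeta_p^\lambda\alpha$, and write $\sigma(x)/x$ as a power of $\zeta_p$ times a ratio of elements $1-\zeta_p^a$. The only difference is in the last step, and it is cosmetic: where the paper cites Lemma 1.3 of Washington to see that $\bigl(1-\zeta_p^{r(r^n-1)}\bigr)/\bigl(1-\zeta_p^{r^n-1}\bigr)$ is a unit (hence an algebraic integer), you substitute $r^n\equiv -1 \pmod p$ and expand $(\zeta_p^{2r}-1)/(\zeta_p^{2}-1)=\sum_{k=0}^{r-1}\zeta_p^{2k}$ explicitly, which is exactly the standard proof of that cited lemma.
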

\begin{proof}
Due to the Lemma \ref{lema1} and to the definition of $x$ we have:
$$\frac{\sigma(x)}{x}=\frac{-\zeta_p^\lambda\alpha\left(-(1-\zeta_p^{r^{n+1}})+(1-\zeta_p^{r^{2n+1}})\right)}{\zeta_p^\lambda\alpha\left(-(1-\zeta_p^{r^{n}})+(1-\zeta_p^{r^{2n}})\right)}=\frac{\zeta_p^{r}-\zeta_p^{r^{n+1}}}{\zeta_p^{r^n}-\zeta_p}=-\frac{\zeta_p^{r}}{\zeta_p}\frac{\left(1-\zeta_p^{r^{n+1}-r}\right)}{\left(1-\zeta_p^{r^n-1}\right)}=-\zeta_p^{r-1}\frac{\left(1-\zeta_p^{r(r^{n}-1)}\right)}{\left(1-\zeta_p^{r^n-1}\right)}.$$
Follows from the Lemma 1.3 of \cite{wash} that the last term is an unit in $\mathcal{O}_\mathbb{K}$. In particular, $\sigma(x)/x$ is an element of $\mathcal{O}_\mathbb{K}\subset\mathbb{Z}[\zeta_p]$, as we wanted to show.
\end{proof}

Therefore, when $p=2n+1$ is a prime number, we can guarantee that there exists $\mathbb{Z}^n$ having maximum diversity and having minimum product distance equal to $p^{\frac{1-n}{2}}$. This occurs, for example, for $n=3,5,9,11,15,19,...$. However, we also can calculate the minimum product distance in other cases only verifying if $\sigma(x)/x$ is an algebraic integer, as in the following example:

\begin{ex}\label{ex2} Consider the rotated $\mathbb{Z}^7$ ($n=7$) developed in the Example \ref{ex1}. In this case, the smallest prime number able to be used to make the construction is $p=29$, which does not satisfy the equality $p=2n+1$. Therefore, to apply the Corollary \ref{corol1} we need to calculate the quocient $\sigma(x)/x$ and verify if this number is an algebraic integer. In fact, $\sigma(x)/x = -\zeta_{29}-\zeta_{29}^{12}-\zeta_{29}^{17}-\zeta_{29}^{28}$
belongs to $\mathbb{Z}[\zeta_{29}]$, because it is a integer combination of powers of $\zeta_{29}$. Therefore, the Corollary \ref{corol1} guarantees that the minimum product distance of this lattice is
$p^{\frac{1-n}{2}}=1/29^{3}$ and that $|N_\mathbb{K}(x)|=p^{\frac{n+1}{2}}=29^4$.
\end{ex}

\begin{obs}\label{obs3} The hypothesis $\sigma(x)/x\in\mathbb{Z}[\zeta_{p}]$ can not be discarted in the Theorem \ref{teorema2}. For example, when $n=13$ and $p=131$ (or $p=157$, or $p=313$), the quotient $\sigma(x)/x$ is not an algebraic integer. However, if $p=53$ or $p=79$, this quotient belongs to $\mathbb{Z}[\zeta_{p}]$.
\end{obs}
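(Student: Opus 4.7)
The remark amounts to a finite collection of numerical assertions about the algebraic integer $x \in \mathcal{O}_\mathbb{K}$ attached to $n = 13$ and each of five specific primes $p$. The plan is a uniform computational verification, performed entirely inside $\mathbb{K}$ rather than inside the large cyclotomic field $\mathbb{Q}(\zeta_p)$.

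For each prime $p \in \{53, 79, 131, 157, 313\}$, fix a primitive root $r$ of $(\mathbb{Z}/p\mathbb{Z})^*$, determine $\lambda$ with $\lambda(r-1) \equiv 1 \pmod p$, and construct $\alpha$, $z$ and $x$ as in Section \ref{sec1}. Applying Lemma \ref{lema1}(b) to simplify each Galois translate of $\zeta_p^\lambda \alpha$, one obtains
$$x = \zeta_p^\lambda \alpha \sum_{j=0}^{t-1} (-1)^{jn}\bigl(1 - \zeta_p^{r^{jn}}\bigr), \qquad \sigma(x) = -\zeta_p^\lambda \alpha \sum_{j=0}^{t-1} (-1)^{jn}\bigl(1 - \zeta_p^{r^{jn+1}}\bigr),$$
with $t = (p-1)/13$, so the common factor $\zeta_p^\lambda\alpha$ cancels and
$$\frac{\sigma(x)}{x} \;=\; -\,\frac{\sum_{j=0}^{t-1} (-1)^{jn}\bigl(1 - \zeta_p^{r^{jn+1}}\bigr)}{\sum_{j=0}^{t-1} (-1)^{jn}\bigl(1 - \zeta_p^{r^{jn}}\bigr)} \;\in\; \mathbb{K}.$$

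To decide whether this ratio lies in $\mathbb{Z}[\zeta_p]$, one tests whether its minimal polynomial over $\mathbb{Q}$ has integer coefficients, since $\mathcal{O}_\mathbb{K} = \mathbb{K}\cap\mathbb{Z}[\zeta_p]$. Concretely, I would compute the Galois conjugates $\sigma^{jn+1}(x)/\sigma^{jn}(x)$ for $0 \le j \le 12$ and expand
$$\prod_{j=0}^{12}\Bigl(T - \sigma^{jn+1}(x)/\sigma^{jn}(x)\Bigr).$$
The claim is that all thirteen coefficients are integers precisely when $p \in \{53, 79\}$, whereas for $p \in \{131, 157, 313\}$ at least one coefficient carries a nontrivial rational denominator, which forces $\sigma(x)/x \notin \mathcal{O}_\mathbb{K}$.

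The main obstacle is purely computational: $\mathbb{Q}(\zeta_{313})$ has degree $312$, so arithmetic in the ambient cyclotomic field is unnecessarily expensive. A practical remedy is to fix at the outset the Gaussian period $\theta = \sum_{j=0}^{t-1} \zeta_p^{r^{jn}}$ as a primitive element of $\mathbb{K}$, pre-compute its minimal polynomial once, and carry out the construction of $x$, the division $\sigma(x)/x$, and the integrality test inside $\mathbb{Q}[\theta] \cong \mathbb{K}$, a field of degree only $13$. With this reduction, any computer algebra system (\textsc{Pari/Gp}, \textsc{Sage} or \textsc{Magma}) confirms each of the five assertions in a fraction of a second, which is precisely the content of the remark.
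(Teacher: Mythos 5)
Your overall plan is the right one: the remark is a purely computational assertion (the paper itself offers no argument beyond stating the outcome), so a machine verification of the integrality of $\sigma(x)/x$ for the five primes, carried out in the degree-$13$ subfield via Gaussian periods, is exactly what is required. Your closed form for $\sigma(x)/x$ obtained from Lemma \ref{lema1}(b) is correct, and the identity $\mathcal{O}_\mathbb{K}=\mathbb{K}\cap\mathbb{Z}[\zeta_p]$ correctly reduces membership in $\mathbb{Z}[\zeta_p]$ to being an algebraic integer.

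There is, however, one concrete error in the step you would actually execute. The Galois conjugates of $u=\sigma(x)/x$ are \emph{not} $\sigma^{jn+1}(x)/\sigma^{jn}(x)$ for $0\le j\le 12$: since $x\in\mathbb{K}$ is fixed by $\sigma^{n}$, every one of those quotients equals $\sigma(x)/x$ itself, so the product you propose is $(T-u)^{13}$, which is not even a polynomial over $\mathbb{Q}$, and the ``integer coefficients'' test as written would spuriously fail for all five primes, contradicting the cases $p=53$ and $p=79$. The correct conjugates are $\sigma^{j}(u)=\sigma^{j+1}(x)/\sigma^{j}(x)$ for $0\le j\le 12$, where $\mathrm{Gal}(\mathbb{K}/\mathbb{Q})$ is generated by the restriction of $\sigma$, and the polynomial to test is $\prod_{j=0}^{12}\bigl(T-\sigma^{j+1}(x)/\sigma^{j}(x)\bigr)$. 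With that correction the procedure is sound and, once run, establishes the stated integrality facts. A further caveat worth recording: verifying that $\sigma(x)/x\notin\mathbb{Z}[\zeta_p]$ for $p=131,157,313$ shows only that the \emph{hypothesis} of Theorem \ref{teorema2} fails there, not that its conclusion fails; the paper is equally silent on this point, so your proposal is consistent with what the remark literally claims.
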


Now we will study the minimum product distance of lattices $D_n$ constructed on the Theorem \ref{teorema1} using the $\mathbb{Z}$-module $\{x+\sigma(x),x-\sigma(x),\sigma(x)-\sigma^2(x),\ldots,\sigma^{n-2}(x)-\sigma^{n-1}(x)\}$.

If $M$ was a principal ideal, we could conclude that the minimum product distance of $\varphi_{1/p²}(M)$ was $2p^{\frac{1-n}{2}}$ (see \cite{grasi}, Section 4.4). However, we will see several situations in which this value is equal to $p^{\frac{1-n}{2}}$. Therefore, $M$ can not be a principal ideal in these cases.

\begin{corol}\label{corol3}
	If $u=\sigma(x)/x\in\mathbb{Z}[\zeta_p]$ then the minimum product distance of $\varphi_{1/p²}(M)\simeq D_n$ satisfies $d_{p,min}(\varphi_{1/p²}(M))\geq p^{\frac{1-n}{2}}.$
\end{corol}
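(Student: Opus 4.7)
The plan is to exhibit $\varphi_{1/p^{2}}(M)$ as a sublattice of $\varphi_{1/p^{2}}(I)$ and then appeal to Corollary \ref{corol1}. The hypothesis $\sigma(x)/x\in\mathbb{Z}[\zeta_p]$ plays no direct role in the main step; it enters only to pin down the value $d_{p,\min}(\varphi_{1/p^{2}}(I))=p^{(1-n)/2}$ at the end.

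First, I would observe that each generator of $M$ --- namely $x+\sigma(x)$, $x-\sigma(x)$, and $\sigma^{i-1}(x)-\sigma^{i}(x)$ for $2\leq i\leq n-1$ --- is an integer linear combination of $x,\sigma(x),\ldots,\sigma^{n-1}(x)$, and so lies in $I=\langle x,\sigma(x),\ldots,\sigma^{n-1}(x)\rangle_{\mathbb{Z}}$. Hence $M\subseteq I$ as $\mathbb{Z}$-submodules of $\mathcal{O}_{\mathbb{K}}$. Because $\varphi_{1/p^{2}}$ is $\mathbb{Z}$-linear, this transfers to the inclusion of lattices $\varphi_{1/p^{2}}(M)\subseteq\varphi_{1/p^{2}}(I)$ in $\mathbb{R}^{n}$, which is nothing but the familiar inclusion $D_{n}\subseteq\mathbb{Z}^{n}$ read through the orthogonal change of basis supplied by Theorem \ref{teorema6}.

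Next, since $\varphi_{1/p^{2}}(I)$ is obtained from the totally real field $\mathbb{K}$, it has maximum diversity; the sublattice $\varphi_{1/p^{2}}(M)$ inherits this property, so the product distance is well-defined on both. Any nonzero $y\in\varphi_{1/p^{2}}(M)$ is a nonzero element of $\varphi_{1/p^{2}}(I)$ with no vanishing coordinate, and thus the infimum defining the minimum product distance of $\varphi_{1/p^{2}}(M)$ is taken over a subset of the one defining that of $\varphi_{1/p^{2}}(I)$:
$$d_{p,\min}(\varphi_{1/p^{2}}(M))\;=\;\inf_{0\neq y\in\varphi_{1/p^{2}}(M)}|y_{1}\cdots y_{n}|\;\geq\;\inf_{0\neq y\in\varphi_{1/p^{2}}(I)}|y_{1}\cdots y_{n}|\;=\;d_{p,\min}(\varphi_{1/p^{2}}(I)).$$

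Finally, under the hypothesis $u=\sigma(x)/x\in\mathbb{Z}[\zeta_{p}]$, Corollary \ref{corol1} gives $d_{p,\min}(\varphi_{1/p^{2}}(I))=p^{(1-n)/2}$, and the claimed bound follows. I do not foresee any real obstacle: the statement is essentially a monotonicity property of $d_{p,\min}$ under sublattice inclusion, combined with the product distance on $\varphi_{1/p^{2}}(I)$ already computed in the previous corollary. What the argument does \emph{not} give --- and what is consistent with the paragraph preceding the corollary --- is equality; improving the inequality to $2\,p^{(1-n)/2}$ would require $M$ to be a principal ideal, which in general it is not.
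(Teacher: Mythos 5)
Your proposal is correct and follows exactly the paper's route: the paper's proof consists of the same observation that $\varphi_{1/p^2}(M)$ is a sublattice of $\varphi_{1/p^2}(I)$, so the bound follows from Corollary \ref{corol1}. You have merely spelled out the inclusion $M\subseteq I$ and the monotonicity of the infimum, which the paper leaves implicit.
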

\begin{proof} This follows straight from the Corollary \ref{corol1} because $\varphi_{1/p²}(M)$ is a sublattice of $\varphi_{1/p^2}(I)$.
\end{proof}

\begin{corol} \label{teorema3}
If $u=\sigma(x)/x\in\mathbb{Z}[\zeta_p]$ and if $1+u$ or $1-u$ is an unit in $\mathcal{O}_\mathbb{K}$ then the minimum product distance of $\varphi_{1/p²}(M)\simeq D_n$ is equal to $p^{\frac{1-n}{2}}$.
\end{corol}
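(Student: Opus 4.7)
The plan is to combine the lower bound already given by Corollary \ref{corol3} with an explicit upper bound realized by one of the listed generators of $M$.

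First, I would recall the standard identity that for every nonzero $y \in \mathcal{O}_\mathbb{K}$, the absolute value of the product of the coordinates of $\varphi_{\beta}(y)$ equals $\sqrt{N_\mathbb{K}(\beta)}\,|N_\mathbb{K}(y)|$. With $\beta = 1/p^2$ one has $\sqrt{N_\mathbb{K}(\beta)} = p^{-n}$, so this coordinate product is $p^{-n}\,|N_\mathbb{K}(y)|$. Thus it suffices to exhibit $y \in M$ with $|N_\mathbb{K}(y)| = p^{(n+1)/2}$, because then the product distance at $y$ will be $p^{(1-n)/2}$, matching the lower bound from Corollary \ref{corol3}.

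Second, I would single out the appropriate generator of $M$. If $1+u$ is a unit in $\mathcal{O}_\mathbb{K}$, take $y = x + \sigma(x)$; if $1-u$ is a unit, take $y = x - \sigma(x)$. In either case
\[
y \;=\; x\,(1\pm u)
\]
is one of the listed generators of $M$, and since a unit in $\mathcal{O}_\mathbb{K}$ has norm $\pm 1$, we get $|N_\mathbb{K}(1\pm u)| = 1$, hence $|N_\mathbb{K}(y)| = |N_\mathbb{K}(x)|$.

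Third, the hypothesis $u = \sigma(x)/x \in \mathbb{Z}[\zeta_p]$ puts us within the scope of Corollary \ref{corol1}, which gives $|N_\mathbb{K}(x)| = p^{(n+1)/2}$. Plugging this into the formula of the first step, the coordinate product of $\varphi_{1/p^2}(y)$ has absolute value $p^{-n}\cdot p^{(n+1)/2} = p^{(1-n)/2}$. Combined with the opposite inequality supplied by Corollary \ref{corol3}, this forces $d_{p,\min}(\varphi_{1/p^2}(M)) = p^{(1-n)/2}$, as claimed. There is no real obstacle in the argument: the unit hypothesis is designed precisely to make the generator $x(1\pm u)$ of $M$ attain the infimum, so the proof is essentially a direct norm computation.
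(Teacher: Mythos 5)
Your proposal is correct and follows essentially the same route as the paper's proof: the upper bound comes from the generator $x\pm\sigma(x)=x(1\pm u)$, whose norm equals $|N_\mathbb{K}(x)|=p^{\frac{n+1}{2}}$ by Corollary \ref{corol1} and the unit hypothesis, and the matching lower bound is Corollary \ref{corol3}. No gaps; the argument is the intended direct norm computation.
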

\begin{proof} If $1+u$ is an unit in $\mathcal{O}_\mathbb{K}$ then from the Corollary \ref{corol1} follows that $x+\sigma(x)=x(1+u)$ has absolute value of its norm given by $|N_\mathbb{K}(x+\sigma(x))|=|N_\mathbb{K}(x)|=p^{\frac{n+1}{2}}$. The same argument can be used for the case in which $1-u$ is an unit.
On the one hand, since $d_{p,min}(\varphi_{1/p²}(M))\geq p^{\frac{1-n}{2}}$ (Corollary \ref{corol3}), we have $\min_{0\neq y \in M} |N_\mathbb{K}(y)| \geq N_\mathbb{K}(1/p^2)^{-2}p^{\frac{1-n}{2}} = p^{\frac{n+1}{2}}.$ Since one of the values $y=x\pm\sigma(x)$ achieve this minimum, then
$d_{p,min}(\varphi_{1/p^2}(M))=p^{-n}p^{\frac{n+1}{2}}=p^{\frac{1-n}{2}}$.
\end{proof}

\begin{obs} Observe that the fact of $1\pm u$ be an unit in $\mathcal{O}_\mathbb{K}$ on the hypothesis of the Corollary \ref{teorema3} is equivalent to the equality $|N_\mathbb{K}(x\pm\sigma(x))|=|N_\mathbb{K}(x)|$. \label{obs1}
\end{obs}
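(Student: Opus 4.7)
My plan is to reduce the claimed equivalence to the classical criterion that an algebraic integer in $\mathcal{O}_\mathbb{K}$ is a unit if and only if its absolute norm equals $1$, applied to the element $1\pm u$. The bridge between the two sides of the equivalence will be the factorization $x\pm\sigma(x)=x(1\pm u)$ together with multiplicativity of $N_\mathbb{K}$.

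Before applying that criterion, I would first verify that $u$ actually lies in $\mathcal{O}_\mathbb{K}$, not merely in $\mathbb{Z}[\zeta_p]$. The hypothesis of Corollary \ref{teorema3} gives $u\in\mathbb{Z}[\zeta_p]$, and since $\sigma^n(x)=x$ (because $x\in\mathbb{K}$, by the description \ref{eq4}) and $\sigma$ commutes with $\sigma^n$, a short computation yields $\sigma^n(u)=\sigma^{n+1}(x)/\sigma^n(x)=\sigma(x)/x=u$. Hence $u\in\mathbb{K}$ and therefore $u\in\mathbb{Z}[\zeta_p]\cap\mathbb{K}=\mathcal{O}_\mathbb{K}$, so $1\pm u\in\mathcal{O}_\mathbb{K}$ has a well-defined integer relative norm.

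Next, I would factor $x\pm\sigma(x)=x(1\pm u)$ and take absolute values of norms to obtain
$$|N_\mathbb{K}(x\pm\sigma(x))|=|N_\mathbb{K}(x)|\cdot|N_\mathbb{K}(1\pm u)|.$$
Since $x\neq 0$, the value $N_\mathbb{K}(x)$ is nonzero (Corollary \ref{corol1} even pins it down to $p^{(n+1)/2}$, though we only need nonvanishing). Dividing through shows that $|N_\mathbb{K}(x\pm\sigma(x))|=|N_\mathbb{K}(x)|$ if and only if $|N_\mathbb{K}(1\pm u)|=1$. Applying the unit criterion to $1\pm u\in\mathcal{O}_\mathbb{K}$ identifies this last condition with $1\pm u$ being a unit, which is exactly the asserted equivalence.

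There is no real obstacle beyond being careful about the membership $u\in\mathcal{O}_\mathbb{K}$: without it one could not invoke the unit criterion for $1\pm u$ inside $\mathcal{O}_\mathbb{K}$, nor use multiplicativity of $N_\mathbb{K}$ directly on the factorization. The rest is routine bookkeeping with the relative norm.
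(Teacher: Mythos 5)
Your proof is correct and is essentially the argument the paper intends: the remark is stated without an explicit proof, the forward direction already appears inside the proof of Corollary \ref{teorema3} via the same factorization $x\pm\sigma(x)=x(1\pm u)$, and your converse is just the standard criterion that an element of $\mathcal{O}_\mathbb{K}$ is a unit if and only if its norm has absolute value $1$. Your preliminary check that $u\in\mathbb{Z}[\zeta_p]\cap\mathbb{K}=\mathcal{O}_\mathbb{K}$ mirrors the corresponding step in the proof of Theorem \ref{teorema2}, so nothing is missing.
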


Remember of the Theorem \ref{teorema4} that $u\in\mathcal{O}_\mathbb{K}$ when $(p-1)/n=2$. Therefore, for that the minimum product distance in these cases be equal to $p^{\frac{1-n}{2}}$ we only need verify if $1+u$ or if $1-u$ is an unit. The following result presents a particular case where the hypothesis of the above theorem always occurs:

\begin{corol} If $n=3$ and $u\in\mathbb{Z}[\zeta_p]$ then the minimum product distance of $D_3$ constructed via any $p\equiv 1~(mod~3)$ is $1/p$. \label{corol11}
\end{corol}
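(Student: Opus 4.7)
The strategy is to apply Corollary \ref{teorema3}: since $u = \sigma(x)/x \in \mathbb{Z}[\zeta_p]$ by hypothesis, it is enough to show that $1+u$ is a unit in $\mathcal{O}_\mathbb{K}$, or equivalently (by Remark \ref{obs1}) that $|N_\mathbb{K}(x+\sigma(x))| = |N_\mathbb{K}(x)|$, which equals $p^2$ by Corollary \ref{corol1}.

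For $n = 3$ the Galois conjugates of $x$ over $\mathbb{Q}$ are $x_1 = x$, $x_2 = \sigma(x)$, $x_3 = \sigma^2(x)$, and the classical three-variable symmetric identity
\begin{equation*}
(x_1+x_2)(x_2+x_3)(x_3+x_1) = e_1 e_2 - e_3
\end{equation*}
specializes to $N_\mathbb{K}(x+\sigma(x)) = Tr_\mathbb{K}(x)\,e_2 - N_\mathbb{K}(x)$, where by Newton's identity $e_2 = \bigl(Tr_\mathbb{K}(x)^2 - Tr_\mathbb{K}(x^2)\bigr)/2$. Lemma \ref{lema2} gives $Tr_\mathbb{K}(x^2) = p^2$, so once I establish $|Tr_\mathbb{K}(x)| = p$ I obtain $e_2 = 0$, hence $N_\mathbb{K}(x+\sigma(x)) = -N_\mathbb{K}(x)$ and Corollary \ref{teorema3} closes the argument.

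The key technical step is therefore $|Tr_\mathbb{K}(x)| = p$, which I would prove by transitivity of the trace: $Tr_\mathbb{K}(x) = Tr_{\mathbb{Q}(\zeta_p)/\mathbb{Q}}(z)$, where $z = \zeta_p^\lambda\alpha(1-\zeta_p)$. By Lemma \ref{lema1}(b), $\sigma^i(\zeta_p^\lambda\alpha) = (-1)^i\zeta_p^\lambda\alpha$, which yields
\begin{equation*}
Tr_{\mathbb{Q}(\zeta_p)/\mathbb{Q}}(z) = \zeta_p^\lambda\alpha \sum_{i=0}^{p-2}(-1)^i\bigl(1 - \zeta_p^{r^i}\bigr).
\end{equation*}
The constant contribution $\sum_{i=0}^{p-2}(-1)^i$ vanishes because $p-1$ is even, and the remaining sum $\sum_i (-1)^i \zeta_p^{r^i}$ is exactly the quadratic Gauss sum $g = \sum_{k=1}^{p-1}\left(\frac{k}{p}\right)\zeta_p^k$, since $r$ is a primitive root and so $r^i$ is a quadratic residue if and only if $i$ is even. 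Combining the classical identity $g^2 = (-1)^m p$ with Lemma \ref{lema1}(c), which gives $(\zeta_p^\lambda\alpha)^2 = (-1)^m p$, forces $\zeta_p^\lambda\alpha\cdot g = \pm(-1)^m p$, hence $|Tr_\mathbb{K}(x)| = p$ as required.

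The principal obstacle is this last Gauss-sum identification, where one must recognize the implicit quadratic character and reconcile two square roots of $(-1)^m p$; happily only the absolute value enters the argument, so explicit sign tracking can be avoided.
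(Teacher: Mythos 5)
Your proposal is correct, but it reaches the crucial fact $e_2 := x\sigma(x)+\sigma(x)\sigma^2(x)+\sigma^2(x)x = 0$ by a much longer route than the paper. Both arguments reduce the corollary, via Remark \ref{obs1} and Corollary \ref{teorema3}, to showing $|N_\mathbb{K}(x+\sigma(x))| = |N_\mathbb{K}(x)|$, and both obtain this from $N_\mathbb{K}(x+\sigma(x)) = e_1e_2 - e_3 = -N_\mathbb{K}(x)$ once $e_2 = 0$ is known. The paper simply observes that for $n=3$ the second elementary symmetric function is itself a trace, $e_2 = Tr_\mathbb{K}(x\sigma(x))$, which vanishes immediately by the case $j=1$ of Lemma \ref{lema2}, so no information about $Tr_\mathbb{K}(x)$ is needed and the norm is expanded in three lines. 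You instead use only the $j=0$ case, $Tr_\mathbb{K}(x^2)=p^2$, and recover $e_2=0$ through Newton's identity after proving $|Tr_\mathbb{K}(x)| = p$ by transitivity of the trace, Lemma \ref{lema1}(b)--(c), and the classical evaluation $g^2=(-1)^{(p-1)/2}p$ of the quadratic Gauss sum (your identification of $\sum_i(-1)^i\zeta_p^{r^i}$ with $g$ is right, since a primitive root is a nonresidue, and the sign ambiguity between the two square roots of $(-1)^m p$ is indeed harmless because only the absolute value enters). Your computation is valid and yields the extra fact $|Tr_\mathbb{K}(x)| = p$, which holds for every odd $n$ and not only $n=3$; the trade-off is that it essentially re-derives by hand information already packaged in the $j\neq 0$ part of Lemma \ref{lema2}, whereas the paper's symmetric-function manipulation avoids Gauss sums entirely.
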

\begin{proof} Since $n=3$ then $M=\langle x,\sigma(x),\sigma^2(x)\rangle_\mathbb{Z}$. The Lemma \ref{lema2} implies that $x\sigma(x)+\sigma(x)\sigma^2(x)+\sigma^2(x)x=0$. Using this identity we have
$$N_\mathbb{K}(x+\sigma(x))=(x+\sigma(x))(\sigma(x)+\sigma^2(x))(\sigma^2(x)+x)=$$
$$(x^2+x\sigma(x)+\sigma(x)\sigma^2(x)+\sigma^2(x)x)(\sigma(x)+\sigma^2(x))=$$
$$x^2(\sigma(x)+\sigma^2(x))=x(x\sigma(x)+x\sigma^2(x))=-x(\sigma(x)\sigma^2(x))=-N_\mathbb{K}(x).$$
The conclusion follows from the Remark \ref{obs1} and from the Corollary \ref{teorema3}.
\end{proof}

\begin{ex} Consider the rotated $D_7$ ($n=7$) developed in the Example \ref{ex1}. The prime number used was $p=29$. In the Example \ref{ex2} was shown that $\sigma(x)/x$ is an algebraic integer. Besides that, $1-u = 1+\zeta_{29}+\zeta_{29}^{12}+\zeta_{29}^{17}+\zeta_{29}^{28}$
and
$$\frac{1}{1-u}=\zeta_{29}^{27} - 2 \zeta_{29}^{26} - 2 \zeta_{29}^{25} + \zeta_{29}^{24} - \zeta_{29}^{23} - 2 \zeta_{29}^{22} - 2 \zeta_{29}^{19} - \zeta_{29}^{18} - \zeta_{29}^{16} - \zeta_{29}^{15} - \zeta_{29}^{14} -$$
$$- \zeta_{29}^{13} - \zeta_{29}^{11} - 2 \zeta_{29}^{10} - 2 \zeta_{29}^{7} - \zeta_{29}^{6} + \zeta_{29}^{5} - 2 \zeta_{29}^{4} - 2 \zeta_{29}^{3} + \zeta_{29}^{2} - 3.$$
So $1-u$ is an unit in $\mathbb{Z}[\zeta_p]$ and, so, it is an unit in $\mathcal{O}_\mathbb{K}$. Follows from the Corollary \ref{teorema3} that the minimum product distance of $\varphi_{1/29^2}(M)\simeq D_7$ is equal to $1/29^3$.\\
However, note that $1+u$ is not an unit in $\mathcal{O}_\mathbb{K}$ because
$$\frac{1}{1+u}=\frac{5}{17} \zeta_{29}^{27} + \frac{6}{17} \zeta_{29}^{26} + \frac{2}{17} \zeta_{29}^{25} + \frac{5}{17} \zeta_{29}^{24} + \frac{7}{17} \zeta_{29}^{23} + \frac{6}{17} \zeta_{29}^{22} + \frac{2}{17} \zeta_{29}^{21} + \frac{2}{17} \zeta_{29}^{20} + \frac{2}{17} \zeta_{29}^{19} - \frac{1}{17} \zeta_{29}^{18} - \frac{1}{17} \zeta_{29}^{16} + \frac{7}{17} \zeta_{29}^{15} +$$
$$+ \frac{7}{17} \zeta_{29}^{14} - \frac{1}{17} \zeta_{29}^{13} - \frac{1}{17} \zeta_{29}^{11} + \frac{2}{17} \zeta_{29}^{10} + \frac{2}{17} \zeta_{29}^{9} + \frac{2}{17} \zeta_{29}^{8} + \frac{6}{17} \zeta_{29}^{7} + \frac{7}{17} \zeta_{29}^{6} + \frac{5}{17} \zeta_{29}^{5} + \frac{2}{17} \zeta_{29}^{4} + \frac{6}{17} \zeta_{29}^{3} + \frac{5}{17} \zeta_{29}^{2} + \frac{7}{17}$$
is not an algebraic integer.
\end{ex}

\begin{obs}\label{obs2} The hypothesis that $1+u$ or $1-u$ is an unit can not be discarted in the Corollary \ref{teorema3}. As in the Remark \ref{obs3}, if $n=13$ and $p=53$ or $p=79$ then $\sigma(x)/x\in\mathbb{Z}[\zeta_p]$. However, in these two cases $1+u$ and $1-u$ are not invertible in $\mathcal{O}_\mathbb{K}$. However, note that this is not a problem because the Corollary \ref{corol3} can still be applied.
\end{obs}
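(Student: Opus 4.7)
The statement is a negative witness: for the two pairs $(n,p)=(13,53)$ and $(n,p)=(13,79)$, the unit hypothesis of Corollary \ref{teorema3} fails even though the weaker hypothesis $u=\sigma(x)/x\in\mathbb{Z}[\zeta_p]$ required by Corollary \ref{corol3} still holds. Accordingly my plan is a direct computational verification inside the two concrete cyclotomic fields $\mathbb{Q}(\zeta_{53})$ and $\mathbb{Q}(\zeta_{79})$, rather than a general structural argument.

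The setup goes as follows. For each $p\in\{53,79\}$ one first notes $p\equiv 1\pmod{13}$ (since $53=4\cdot 13+1$ and $79=6\cdot 13+1$), so the construction of Section \ref{sec1} applies. I would fix a primitive root $r$ of $(\mathbb{Z}/p\mathbb{Z})^{\times}$ (for instance $r=2$ for $p=53$ and $r=3$ for $p=79$), solve $\lambda(r-1)\equiv 1\pmod p$, and write down $\alpha=\prod_{j=0}^{(p-3)/2}(1-\zeta_p^{r^j})$, $z=\zeta_p^{\lambda}\alpha(1-\zeta_p)$, and $x=\sum_{j=1}^{(p-1)/13}\sigma^{13j}(z)$, all expanded in the basis $\{1,\zeta_p,\ldots,\zeta_p^{p-2}\}$ of $\mathbb{Z}[\zeta_p]$.

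The proof then consists of two explicit checks. First, $u\in\mathbb{Z}[\zeta_p]$: compute $u=\sigma(x)\cdot x^{-1}$ in $\mathbb{Q}(\zeta_p)=\mathbb{Q}[T]/(\Phi_p(T))$ (linear algebra modulo the $p$-th cyclotomic polynomial) and observe that every rational coefficient of the result is an integer; this is already announced in Remark \ref{obs3} and only needs a mechanical verification. Second, that neither $1+u$ nor $1-u$ is a unit of $\mathcal{O}_{\mathbb{K}}$. Here I would invoke Remark \ref{obs1}, which reduces the unit question to the norm identity $|N_{\mathbb{K}}(x\pm\sigma(x))|=|N_{\mathbb{K}}(x)|$; by Corollary \ref{corol1} the right-hand side equals $p^{(n+1)/2}=p^{7}$. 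So it suffices to compute $|N_{\mathbb{K}}(x+\sigma(x))|$ and $|N_{\mathbb{K}}(x-\sigma(x))|$ and exhibit that each differs from $p^{7}$, or equivalently to compute $(1+u)^{-1}$ and $(1-u)^{-1}$ in $\mathbb{Q}(\zeta_p)$ and display at least one rational coefficient with denominator $>1$ in each.

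The main obstacle is purely practical rather than conceptual: the elements $x$, $\sigma(x)$, $u$, $1\pm u$ live in $\mathbb{Q}(\zeta_p)$ of degree $p-1\in\{52,78\}$, and after expansion they carry close to $p-1$ nonzero coefficients of non-trivial size, so the inversion and norm computations are intractable by hand. In practice I would carry out the verification in a computer algebra system such as SageMath, PARI/GP, or Magma. Since the assertion is merely a negative witness to the sharpness of Corollary \ref{teorema3}, no general argument is either available or needed, and the concluding sentence of the remark is just the observation that the lower bound of Corollary \ref{corol3} still governs these cases, irrespective of whether the stronger equality of Corollary \ref{teorema3} can be reached.
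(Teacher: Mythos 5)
Your proposal is correct and coincides with what the paper implicitly does: the remark carries no written proof and rests on exactly the kind of exact computer-algebra verification you describe (mirroring the worked $p=29$ example, where non-unitness is exhibited via non-integral coefficients of $(1+u)^{-1}$ in the power basis of $\mathbb{Z}[\zeta_p]$). Your reduction of the unit question to the norm comparison with $p^{7}$ via Remark \ref{obs1} and Corollary \ref{corol1} is a valid equivalent formulation, so no gap remains beyond carrying out the stated computation.
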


\section{Algebraic lattices $\mathbb{Z}^n$ and $D_n$, for any even number $n>1$}\label{sec4}

In this section we construct algebraic lattices $\mathbb{Z}^n$ and $D_n$ for any even integer number $n>1$ and study their minimum product distance. Firstly, we redeem the construction of $\mathbb{Z}^k$ for $k=2^m>1$ of \cite{sethoggier} and obtain $D_k$. After, combining these results with the construction and results made in Sections \ref{sec1} and \ref{sec2} for $\mathbb{Z}^l$, $l$ odd, we obtain $\mathbb{Z}^n$ and $D_n$, for any $n>1$.

Let $m\geq 3$ be an integer number. Consider $k=2^{m-2}$ and $\omega=e^{\frac{2\pi i}{2^m}}$ a $2^m$-th primitive root of unity. Denoting by $\theta$ the number $\omega+\omega^{-1}$ we see that $\mathbb{L}=\mathbb{Q}(\theta)$ is the maximal real subfield of the cyclotomic field $\mathbb{Q}(\omega)$. So, since $[\mathbb{Q}(\omega):\mathbb{L}]=2$, the degree of the field $\mathbb{L}$ is $k$. Besides that, $\mathcal{O}_\mathbb{L}=\mathbb{Z}[\theta]$ (see \cite{wash}, Proposition 2.16). Denote $\theta_j=\omega^j+\omega^{-j}$, for $j\geq 0$. The next theorem presents an algebraic $\mathbb{Z}^k$-rotated lattice:

\begin{teorema}[\cite{sethoggier}, Theorem 1] Consider $w_0=1$, $w_1=1+\theta_1$, $\ldots$, $w_{k-1}=1+\theta_1+\ldots+\theta_{k-1}$. The set $H=\{w_0,w_1,\ldots,w_{k-1}\}$ is a basis of $\mathcal{O}_\mathbb{L}$. Besides that, if $\beta=1/k-\theta/(2k)$ then the lattice $\varphi_\beta(\mathcal{O}_\mathbb{L})$ is a rotated version of $\mathbb{Z}^k$ with this basis. \label{teorema7}
\end{teorema}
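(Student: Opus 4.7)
The plan is to verify two things: that $H$ is a $\mathbb{Z}$-basis of $\mathcal{O}_\mathbb{L}$, and that the Gram matrix of $\varphi_\beta(\mathcal{O}_\mathbb{L})$ in that basis equals $I_k$, which will exhibit the lattice as a rotated $\mathbb{Z}^k$. For the basis claim, starting from $\mathcal{O}_\mathbb{L}=\mathbb{Z}[\theta]$ and the Chebyshev-type recursion $\theta_{j+1}=\theta\theta_j-\theta_{j-1}$ (which follows from $(\omega+\omega^{-1})(\omega^j+\omega^{-j})=\theta_{j+1}+\theta_{j-1}$), one sees by induction that each $\theta_j$ is a monic integer polynomial of degree $j$ in $\theta$. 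Hence $\{1,\theta_1,\ldots,\theta_{k-1}\}$ has unimodular transition to the power basis and is itself a $\mathbb{Z}$-basis. Since $w_j = 1+\theta_1+\cdots+\theta_j$, the set $H$ is related to this basis by a lower-triangular integer change of basis with $1$'s on the diagonal, so $H$ is also a $\mathbb{Z}$-basis of $\mathcal{O}_\mathbb{L}$.

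For the Gram matrix, since $\mathbb{L}$ is totally real the entries are $G_{ij}=Tr_{\mathbb{L}/\mathbb{Q}}(\beta w_iw_j)$, and because $2k\beta=2-\theta$ it suffices to show $Tr_{\mathbb{L}/\mathbb{Q}}((2-\theta)w_iw_j)=2k\,\delta_{ij}$. The central algebraic identity to prove is
\[
(2-\theta)\,w_j \;=\; \theta_j-\theta_{j+1} \qquad (j\ge 1),
\]
which telescopes from $(2-\theta)\theta_a = 2\theta_a-\theta_{a+1}-\theta_{a-1}$ upon summing $a$ from $1$ to $j$ and adding the leading term (using $\theta_0=2$). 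Combined with the product rule $\theta_a\theta_b = \theta_{a+b}+\theta_{|a-b|}$, the element $(2-\theta)w_iw_j = w_i(\theta_j-\theta_{j+1})$ expands into an explicit $\mathbb{Z}$-linear combination of $\theta_\ell$'s whose indices lie in $[0,\,i+j+1]\subseteq[0,\,2k-1]$.

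The computation is then closed by a trace-vanishing lemma: from $[\mathbb{Q}(\omega):\mathbb{L}]=2$ together with the standard cyclotomic formula $Tr_{\mathbb{Q}(\omega)/\mathbb{Q}}(\omega^\ell) = \mu(2^m/d)\phi(2^m)/\phi(2^m/d)$ with $d=\gcd(\ell,2^m)$, one obtains $Tr_{\mathbb{L}/\mathbb{Q}}(\theta_\ell) = 2k$ if $\ell\equiv 0\pmod{2^m}$, equals $-2k$ if $\ell\equiv 2^{m-1}\pmod{2^m}$, and equals $0$ otherwise. Since every $\ell$ appearing in our expansion satisfies $\ell\le 2k-1<2^{m-1}$, only $\ell=0$ contributes; it arises from $\theta_{|a-j|}$ at $a=j$ (giving $+2k$) and from $-\theta_{|a-j-1|}$ at $a=j+1$ (giving $-2k$), each present only when $a\in\{1,\ldots,i\}$. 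A short case split on $(i,j)$ delivers $G_{ij}=\delta_{ij}$ for $i,j\ge 1$, while the cases involving $w_0=1$ are checked directly from $Tr(2-\theta)=2k$ and the identity above. The main obstacle is the indexing bookkeeping during this expansion, together with the careful verification that the upper bound $\ell<2^{m-1}$ always holds, which is precisely what forbids any $\theta_{2^{m-1}}=-2$ term from spoiling the cancellation.
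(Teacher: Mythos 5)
Your argument is correct, and it is worth noting that the paper itself offers no proof of this statement: it is quoted from Sethuraman--Oggier (\cite{sethoggier}, Theorem 1), so there is no internal proof to compare against. Your verification is the natural direct one and it checks out: the recursion $\theta_{j+1}=\theta\theta_j-\theta_{j-1}$ gives the unimodular triangular change of basis showing $H$ is a $\mathbb{Z}$-basis of $\mathcal{O}_\mathbb{L}=\mathbb{Z}[\theta]$; the telescoping identity $(2-\theta)w_j=\theta_j-\theta_{j+1}$ (which in fact also holds for $j=0$, since $\theta_0=2$, and this handles all the $w_0$ cases at once); the product rule $\theta_a\theta_b=\theta_{a+b}+\theta_{|a-b|}$; and the trace values $Tr_{\mathbb{L}/\mathbb{Q}}(\theta_\ell)=2k,\,-2k,\,0$ according as $\ell\equiv 0$, $\ell\equiv 2^{m-1}$, or neither $\pmod{2^m}$, together with the bound $i+j+1\le 2k-1<2^{m-1}$ that excludes the $-2k$ contribution. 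The resulting bookkeeping does yield $Tr_{\mathbb{L}/\mathbb{Q}}(\beta w_iw_j)=\delta_{ij}$, i.e.\ Gram matrix $I_k$, hence a rotated $\mathbb{Z}^k$. The only point you leave implicit is that $\beta=(2-\theta)/(2k)$ is totally positive (so that $\varphi_\beta$ is defined); this is immediate since every conjugate of $\theta$ is $2\cos(2\pi t/2^m)\in(-2,2)$, but it deserves a sentence.
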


Denote by $\tau$ the generator of the cyclic Galois group of $\mathbb{L}$ over $\mathbb{Q}$. If $r$ is the primitive element module $2^{m-1}$, the monomorphism $\tau$ can be defined by $\tau(\omega)=\omega^r$. The generator matrix of $\varphi_\beta(\mathcal{O}_\mathbb{L})$ is, by definition,
\begin{equation}\label{eq5}
G=\left(\begin{matrix}
w_0 & \tau(w_0) & \ldots & \tau^{k-1}(w_0)\\
w_1 & \tau(w_1) & \ldots & \tau^{k-1}(w_1)\\
\vdots & \vdots & \ddots & \vdots\\
w_{k-1} & \tau(w_{k-1}) & \ldots & \tau^{k-1}(w_{k-1})
\end{matrix}\right)\left(\begin{matrix}
\sqrt{\beta} & 0 & \ldots & 0\\
0 & \sqrt{\tau(\beta)} & \ldots & 0\\
\vdots & \vdots & \ddots & \vdots\\
0 & 0 & \ldots & \sqrt{\tau^{k-1}(\beta)}
\end{matrix}\right).
\end{equation}
To calculate explicitly this matrix, note that $\tau^j(w_0)=1$ and that $\tau^j(w_i) = 1+\theta_{r^j}+\theta_{2r^j}+\ldots+\theta_{ir^j}$, for $0\leq i,j\leq k-1$. Consequently it is possible construct the lattice $D_k$ analogously to what was done in the Section 4.2 of \cite{grasi2}:

\begin{teorema} Consider $L$ the $\mathbb{Z}$-module generated by
	$$\{w_0+w_1,w_0-w_1,\ldots,w_{k-2}-w_{k-1}\}=\{2+\theta_1,-\theta_1,-\theta_2,\ldots,-\theta_{k-1}\}.$$
	So, the algebraic lattice $\sigma_\beta(L)$ is a rotated version of $D_k$ (with $k=2^{m-2}$). \label{teorema5}
\end{teorema}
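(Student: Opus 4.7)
The plan is to mimic the proof of Theorem \ref{teorema1}, substituting the orthogonal matrix $G$ from \ref{eq5} for the circulant one used in the odd case.

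First I would write down the standard generator matrix of $D_k$, of exactly the same shape as \ref{eq6} but of size $k \times k$: rows $(-1,-1,0,\ldots,0)$, $(1,-1,0,\ldots,0)$, $(0,1,-1,0,\ldots,0)$, and so on down to $(0,\ldots,0,1,-1)$. The crucial fact I would borrow is that the matrix $G$ in \ref{eq5} is orthogonal: this is precisely the content of Theorem \ref{teorema7}, which asserts that $\varphi_\beta(\mathcal{O}_\mathbb{L})$ is a rotated $\mathbb{Z}^k$-lattice with respect to the basis $H = \{w_0,\ldots,w_{k-1}\}$, i.e.\ the rows $\varphi_\beta(w_0),\ldots,\varphi_\beta(w_{k-1})$ of $G$ form an orthonormal basis of $\mathbb{R}^k$.

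Next I would compute the product of the $D_k$-generator matrix with $G$ row by row. Since the $(i+1)$-st row of $G$ is $\varphi_\beta(w_i)$ and $\varphi_\beta$ is $\mathbb{Z}$-linear, the first product-row is $\varphi_\beta(-w_0 - w_1) = \varphi_\beta(-(2+\theta_1))$, the second is $\varphi_\beta(w_0 - w_1) = \varphi_\beta(-\theta_1)$, and for $j = 3,\ldots,k$ the $j$-th product-row equals $\varphi_\beta(w_{j-2} - w_{j-1}) = \varphi_\beta(-\theta_{j-1})$. Here I use the identity $w_j - w_{j-1} = \theta_j$, which is immediate from the definition $w_j = 1 + \theta_1 + \cdots + \theta_j$ given in Theorem \ref{teorema7}. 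Up to sign changes on individual rows, which do not alter the generated lattice, these are the $\varphi_\beta$-images of the listed generators of $L$. Therefore the product is a generator matrix of $\varphi_\beta(L)$, and because it was obtained from a generator matrix of $D_k$ by right-multiplication by an orthogonal matrix, $\varphi_\beta(L)$ is a rotated version of $D_k$.

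The argument is essentially routine matrix algebra, entirely parallel to that of Theorem \ref{teorema1}, and I do not foresee any serious obstacle. The only genuinely non-trivial input is the orthogonality of $G$, which I borrow from Theorem \ref{teorema7}; everything else reduces to the telescoping identity $w_j - w_{j-1} = \theta_j$ and the $\mathbb{Z}$-linearity of the twisted embedding $\varphi_\beta$.
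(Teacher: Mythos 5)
Your proposal is correct and takes essentially the same approach as the paper: multiply the $k\times k$ generator matrix of $D_k$ of the form \ref{eq6} on the right by the orthogonal matrix $G$ of \ref{eq5} and recognize the resulting rows as the $\varphi_\beta$-images of the generators of $L$. The paper's proof is exactly this argument stated more briefly; your explicit row-by-row computation via $w_j-w_{j-1}=\theta_j$ and the remark that sign changes of rows do not affect the generated lattice merely fill in details the paper leaves implicit.
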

\begin{proof} To prove this theorem it is enough multiply the matrix in \ref{eq6} by the above matrix $G$, as in the proof of Theorem \ref{teorema1}. Since $G$ is a rotation matrix, this lattice is a rotated version of $D_k$.
\end{proof}

Note that other set of generators for the $\mathbb{Z}$-module $L$ in the Theorem \ref{teorema5} is given by
\begin{equation}\label{eq7}
\{2\theta_0,\theta_1,\theta_2,\ldots,\theta_{k-1}\}.
\end{equation}
So, it is not difficult prove that the $\mathbb{Z}$-module $L$ is the principal ideal $\theta_1\mathcal{O}_\mathbb{L}$ (see \cite{grasi2}, Proposition 4.7). To the following we need the next lemma:

\begin{lema}[\cite{lopes}, Theorem 3.2]\label{lema3} The discriminant of $\mathbb{L}$ is given by $2^{(m-1)k-1}$.
\end{lema}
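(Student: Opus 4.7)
The plan is to bootstrap from Theorem \ref{teorema7} rather than attacking the discriminant directly. Since the latter says that $\varphi_\beta(\mathcal{O}_\mathbb{L})$ is a rotated copy of $\mathbb{Z}^k$, it has volume $1$, and the general volume formula for algebraic lattices (with $N(\mathcal{O}_\mathbb{L})=1$) gives
$$1 \;=\; \operatorname{vol}(\varphi_\beta(\mathcal{O}_\mathbb{L})) \;=\; \sqrt{\bigl|d_\mathbb{L}\cdot N_\mathbb{L}(\beta)\bigr|},$$
so $|d_\mathbb{L}| = |N_\mathbb{L}(\beta)|^{-1}$. This reduces the lemma to computing one algebraic norm.

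Writing $\beta = (2-\theta)/(2k)$, the denominator contribution is immediate: $2k = 2^{m-1}\in\mathbb{Q}$, so $N_\mathbb{L}(2k) = (2^{m-1})^k = 2^{(m-1)k}$. The nontrivial piece is $N_\mathbb{L}(2-\theta)$, and the key trick I would use is the factorization $2-\theta_a = (1-\omega^a)(1-\omega^{-a})$, where $\theta_a := \omega^a + \omega^{-a}$. The conjugates of $\theta$ in $\operatorname{Gal}(\mathbb{L}/\mathbb{Q})$ are exactly the $\theta_a$ as $\{a,-a\}$ runs over the $k$ classes of $(\mathbb{Z}/2^m\mathbb{Z})^*/\{\pm 1\}$. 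Collecting the two factors over each such pair recovers the product over the full group $(\mathbb{Z}/2^m\mathbb{Z})^*$:
$$N_\mathbb{L}(2-\theta) \;=\; \prod_{a \in (\mathbb{Z}/2^m\mathbb{Z})^*}(1-\omega^a) \;=\; \Phi_{2^m}(1) \;=\; 1^{2^{m-1}}+1 \;=\; 2.$$

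Putting the two norms together, $N_\mathbb{L}(\beta) = 2 / 2^{(m-1)k} = 2^{1-(m-1)k}$, and therefore $|d_\mathbb{L}| = 2^{(m-1)k-1}$, as claimed.

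The only point that really needs care (the main obstacle, such as it is) is the identification of the $\operatorname{Gal}(\mathbb{L}/\mathbb{Q})$-conjugates of $\theta$ with the classes in $(\mathbb{Z}/2^m\mathbb{Z})^*/\{\pm 1\}$: one has to recall that $\operatorname{Gal}(\mathbb{Q}(\omega)/\mathbb{Q})\cong(\mathbb{Z}/2^m\mathbb{Z})^*$ and that the index-$2$ subgroup fixing $\theta = \omega+\omega^{-1}$ is complex conjugation, i.e.\ $\{\pm 1\}$. Once that is in place, the telescoping of the conjugate pairs into $\Phi_{2^m}(1)$ is automatic, and the remainder is bookkeeping.
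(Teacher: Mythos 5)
Your proof is correct, but it follows a genuinely different route from the paper: the paper does not prove this lemma at all, it simply quotes it from the reference on discriminants of subfields of $\mathbb{Q}(\zeta_{2^r})$ (where it is obtained by a direct number-theoretic computation, essentially the conductor--discriminant formula), whereas you reverse the usual logic and deduce the discriminant from the already-quoted fact (Theorem \ref{teorema7}) that $\varphi_\beta(\mathcal{O}_\mathbb{L})$ is a rotated $\mathbb{Z}^k$, via $1=\operatorname{vol}(\varphi_\beta(\mathcal{O}_\mathbb{L}))=\sqrt{\lvert d_\mathbb{L}N_\mathbb{L}(\beta)\rvert}$ and the computation $N_\mathbb{L}(2-\theta)=\Phi_{2^m}(1)=2$, $N_\mathbb{L}(2k)=2^{(m-1)k}$. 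The computation itself is sound (your identification of the embeddings of $\mathbb{L}$ with $(\mathbb{Z}/2^m\mathbb{Z})^*/\{\pm1\}$ and the pairing $2-\theta_a=(1-\omega^a)(1-\omega^{-a})$ are exactly right, and they match the norm computation the paper itself performs inside Theorem \ref{teorema11}), and there is no circularity, since the cited proof of Theorem \ref{teorema7} proceeds by a direct Gram-matrix/trace calculation that never invokes $d_\mathbb{L}$ --- but you should say this explicitly, because otherwise a reader may worry that the unimodularity statement was itself derived from the discriminant. Two small touch-ups: note that $\mathbb{L}$ is totally real, so $d_\mathbb{L}>0$ and the absolute value can be dropped, giving the stated equality rather than only $\lvert d_\mathbb{L}\rvert=2^{(m-1)k-1}$; and the trade-off between the two approaches is worth keeping in mind --- the paper's citation keeps the discriminant as an independent input (the logically cleaner dependency, since one usually uses $d_\mathbb{L}$ to \emph{check} unimodularity), while your argument is shorter and reuses only material already present in the paper.
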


Since the lattices $\mathbb{Z}^k$ and $D_k$ ($k=2^{m-2}$, $m\geq 3$) are constructed via the twisted homomorphism using principal ideals in ring of integers of totally real number fields then both have maximum diversity. Besides that, due to the fact that ideals used in the construction are principal and due to the Lemma \ref{lema3}, the minimum product distance of $\mathbb{Z}^k$ is
$$d_{p,min}(\varphi_\beta(\mathcal{O}_\mathbb{L}))=\sqrt{\frac{det(\varphi_\beta(\mathcal{O}_\mathbb{L}))}{d_\mathbb{L}}}=\sqrt{\frac{1}{2^{(m-1)k-1}}}=2^{\frac{1-(m-1)k}{2}}$$
and the minimum product distance of $D_k$ is
$$d_{p,min}(\varphi_\beta(L))=\sqrt{\frac{det(\varphi_\beta(L))}{d_\mathbb{L}}}=\sqrt{\frac{4}{2^{(m-1)k-1}}}=2^{\frac{3-(m-1)k}{2}}.$$

\begin{ex} Consider $m=3$ and $k=2$. So $\theta=\zeta_8+\zeta_8^{-1}=\sqrt{2}$, $\mathbb{L}=\mathbb{Q}(\sqrt{2})$ and $\beta=(2-\sqrt{2})/4$. Now, $H=\{w_0=1,w_1=1+\sqrt{2}\}$ is a basis of the lattice $\varphi_\beta(\mathbb{Z}[\sqrt{2}])\simeq \mathbb{Z}^2$. In turn, $L=\langle 2,\sqrt{2} \rangle$ is the $\mathbb{Z}$-module such that $\varphi_\beta(L)\simeq D_2$. Both lattices have maximum diversity. In this construction, the minimum product distance of $\mathbb{Z}^2$ is $1/2\sqrt{2}$ and of $D_2$ is $1/\sqrt{2}$.
\end{ex}

\begin{ex} Consider $m=4$, $k=4$,  $\theta=\zeta_{16}+\zeta_{16}^{-1}=2cos(\pi/8)$, $\mathbb{L}=\mathbb{Q}(2cos(\pi/8))$ and $\beta=(1-cos(\pi/8))/4$. Now, $H=\{w_0=1,w_1=1+2cos(\pi/8),w_2=1+2cos(\pi/8)+\sqrt{2},1+2cos(\pi/8)+\sqrt{2}+2cos(3\pi/8)\}$ is a basis of the lattice $\varphi_\beta(\mathbb{Z}[2cos(\pi/8)])\simeq \mathbb{Z}^4$. In turn, $L=\langle 2, 2cos(\pi/8),\sqrt{2},2cos(3\pi/8) \rangle$ is the $\mathbb{Z}$-module such that $\varphi_\beta(L)\simeq D_4$. Both lattices have maximum diversity. In this construction, the minimum product distance of $\mathbb{Z}^4$ is $2^{-11/2}$ and of $D_4$ is $2^{-9/2}$.
\end{ex}

Now we will construct the lattices $\mathbb{Z}^n$ and $D_n$, for any even $n>1$, using the compositum of the field considered above to construct lattices $\mathbb{Z}^n$ and $D_n$ for $n$ a power of two with the field used in Section \ref{sec1} to construct $\mathbb{Z}^n$ and $D_n$ for $n$ an odd number, following the ideas and constructions made in \cite{bayer-oggier} and \cite{sethoggier}. For this, we use notations made above and in the Section \ref{sec1}. Consider $l>1$ an odd number and $k=2^{m-2}$, $m\geq 3$. Remember that $\mathbb{K}$ was defined in \ref{eq4} and has odd degree $l$, while $\mathbb{L}=\mathbb{Q}(\omega)=\mathbb{Q}(\zeta_{2^m}+\zeta_{2^m}^{-1})$ has degree $k=2^{m-2}$. We can see that $\mathbb{K}\cap\mathbb{L}=\mathbb{Q}$.

Lattices $\mathbb{Z}^n$ and $D_n$ for $n$ an odd number and for $n$ equal a power of $2$ were treated previously. Only remains the case $n=kl$, where $k$ and $l$ are as above. For this, let $\mathbb{KL}$ be the compositum of the fields $\mathbb{K}$ and $\mathbb{L}$ (this is, the smallest field containing $\mathbb{K}$ and $\mathbb{L}$ simultaneously).  In this way, the compositum $\mathbb{K}\mathbb{L}$ has degree $n=kl$ and can be used to construct $\mathbb{Z}^n$ and $D_n$ (see Chapter 13, Item W, of \cite{rib}).

Next theorem is similar to the Proposition 6 of \cite{bayer-oggier}. Its proof can be done following the steps of the Section 4 of \cite{sethoggier}. Consider the ideal $I$ of \ref{eq3}, $J=\mathcal{O}_\mathbb{L}$, $p$ the prime number satisfying $p\equiv 1~(mod~l)$ used previously and $w_i$ defined above.

\begin{teorema}[\cite{sethoggier}, Section 4] \label{teorema8} Let $\mathcal{I}$ be the product ideal $IJ\subset\mathcal{O}_\mathbb{KL}$ and $\beta=p^{-2}(1/k-\theta/2k)$. So
\begin{enumerate}
	\item[a)] $\mathcal{I}=\langle w_0x,w_0\sigma(x)\ldots,w_0\sigma^{l-1}(x),w_1x,\ldots,w_{1}\sigma^{l-1}(x),\ldots,w_{k-1}x,\ldots,w_{k-1}\sigma^{l-1}(x)\rangle_\mathbb{Z}.$
	\item[b)] $\varphi_\beta(\mathcal{I})$ is a rotated version of $\mathbb{Z}^n$.
	\item[c)] The generator matrix of this lattice is the tensor product of the matrix $G$ explicited in the Theorem \ref{teorema6} by the matrix $G$ explicited in \ref{eq5}.
\end{enumerate}
\end{teorema}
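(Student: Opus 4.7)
The plan is to reduce all three claims to the tensor-product structure of the compositum $\mathbb{KL}$. Since $\mathbb{K}\cap\mathbb{L}=\mathbb{Q}$ and the discriminants $d_\mathbb{K}=p^{l-1}$ and $d_\mathbb{L}=2^{(m-1)k-1}$ are coprime (an odd prime power and a power of $2$), a standard result on rings of integers in compositums (\cite{rib}, Chapter 13) yields $\mathcal{O}_{\mathbb{KL}}=\mathcal{O}_\mathbb{K}\mathcal{O}_\mathbb{L}$, with a $\mathbb{Z}$-basis given by products of $\mathbb{Z}$-bases of the two factors. The same decomposition carries over to product ideals: for ideals $A\subset\mathcal{O}_\mathbb{K}$ and $B\subset\mathcal{O}_\mathbb{L}$, the product ideal $AB\subset\mathcal{O}_{\mathbb{KL}}$ inherits the $\mathbb{Z}$-basis $\{a_ib_j\}$ from $\mathbb{Z}$-bases $\{a_i\}$ of $A$ and $\{b_j\}$ of $B$. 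Applying this with $A=I$ (whose $\mathbb{Z}$-basis $\{x,\sigma(x),\ldots,\sigma^{l-1}(x)\}$ was established in Theorem \ref{teorema14}) and $B=\mathcal{O}_\mathbb{L}$ (with basis $\{w_0,\ldots,w_{k-1}\}$ from Theorem \ref{teorema7}) immediately gives the generating set of (a).

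For part (c), identify the $n=kl$ real embeddings of $\mathbb{KL}$: since $\mathrm{Gal}(\mathbb{KL}/\mathbb{Q})\cong\langle\sigma\rangle\times\langle\tau\rangle$, each embedding has the form $\sigma^{i'}\tau^{j'}$ with $0\le i'\le l-1$ and $0\le j'\le k-1$. Evaluate the twisted homomorphism $\varphi_\beta$ on a generator $w_i\sigma^j(x)$: because $p^{-2}\in\mathbb{Q}$, because $\sigma^j(x)\in\mathbb{K}$ is fixed by $\tau^{j'}$, and because $w_i\in\mathbb{L}$ is fixed by $\sigma^{i'}$, the $(i',j')$-th coordinate factors as
\begin{equation*}
\sqrt{\sigma^{i'}\tau^{j'}(\beta)}\,\sigma^{i'}\tau^{j'}\!\bigl(w_i\sigma^j(x)\bigr)=\Bigl[\tfrac{1}{p}\sigma^{i'+j}(x)\Bigr]\cdot\Bigl[\sqrt{\tau^{j'}\!\bigl(\tfrac{1}{k}-\tfrac{\theta}{2k}\bigr)}\,\tau^{j'}(w_i)\Bigr].
\end{equation*}
The first bracket is an entry of the matrix $G$ of Theorem \ref{teorema6}, and the second is an entry of the matrix \ref{eq5} of Theorem \ref{teorema7}. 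Ordering the row indices as $(i,j)$ and the column indices as $(j',i')$ lexicographically, this factorization is precisely the entry pattern of the Kronecker product of the two generator matrices, proving (c).

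Part (b) then follows at once: both component matrices are orthogonal (Theorems \ref{teorema6} and \ref{teorema7}), and the Kronecker product of orthogonal matrices is orthogonal since $(A\otimes B)(A\otimes B)^T=(AA^T)\otimes(BB^T)=I_l\otimes I_k=I_n$. Hence the generator matrix of $\varphi_\beta(\mathcal{I})$ is orthogonal, so $\varphi_\beta(\mathcal{I})$ is a rotated $\mathbb{Z}^n$. The main obstacle I foresee is the coprime-discriminant argument for $\mathcal{O}_{\mathbb{KL}}=\mathcal{O}_\mathbb{K}\mathcal{O}_\mathbb{L}$ and the corresponding product-ideal basis statement; once this algebraic-number-theoretic scaffolding is in place, everything else reduces to bookkeeping on Galois actions and on Kronecker-product indexing. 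A minor side-check is that $\beta$ is totally positive (so $\varphi_\beta$ is well-defined): this amounts to $\tau^{j'}(\theta)=2\cos(\pi r^{j'}/2^{m-1})<2$ for all $j'$, which holds because $r^{j'}$ is odd.
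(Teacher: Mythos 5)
Your proposal is correct. Note that the paper does not actually prove Theorem \ref{teorema8}: it only cites Section 4 of \cite{sethoggier} (and the analogy with Proposition 6 of \cite{bayer-oggier}), so there is no in-paper argument to compare against; your write-up supplies exactly the standard compositum/tensor-product route that the citation points to --- $\mathcal{O}_{\mathbb{KL}}=\mathcal{O}_\mathbb{K}\mathcal{O}_\mathbb{L}$ with product $\mathbb{Z}$-basis via the coprime discriminants $p^{l-1}$ and $2^{(m-1)k-1}$, the identification $\mathrm{Gal}(\mathbb{KL}/\mathbb{Q})\cong\langle\sigma\rangle\times\langle\tau\rangle$ giving the factorization of each coordinate of $\varphi_\beta(w_i\sigma^j(x))$, and orthogonality of the Kronecker product --- and all steps check out (the only cosmetic point being that your indexing yields the tensor product in the opposite order, which differs from the stated one by a row/column permutation and does not affect the lattice).
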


Analogously to what was done above, it is possible take out a rotated version of $D_n$ from $\mathbb{Z}^n$:

\begin{teorema}\label{teorema9} Consider the $\mathbb{Z}$-module	$\mathcal{M}$ generated by
	$$\{w_0(x+\sigma(x))\}\cup \{w_{i}\sigma^{n-1}(x)-w_{i+1}x, 0\leq i\leq k-2\}\cup\left(\bigcup_{0\leq j \leq k-1}\{w_j(\sigma^i(x)-\sigma^{i+1}(x)),0\leq i\leq n-2\} \right)$$
and $\beta = p^{-2}(1/k-\theta/(2k))$. So, the algebraic lattice $\varphi_\beta(\mathcal{M})$ is a rotated version of $D_n$.
\end{teorema}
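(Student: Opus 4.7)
The plan is to mirror the proofs of Theorems \ref{teorema1} and \ref{teorema5}: take the standard generator matrix of $D_n$ displayed in (\ref{eq6}) and multiply it on the right by the generator matrix of $\varphi_\beta(\mathcal{I})\simeq\mathbb{Z}^n$ supplied by Theorem \ref{teorema8}, which is the Kronecker product of the orthogonal matrix of Theorem \ref{teorema6} (for $\mathbb{K}$) with the matrix in (\ref{eq5}) (for $\mathbb{L}$). That Kronecker product is a rotation of $\mathbb{R}^n$, so the product is automatically a generator matrix of a rotated copy of $D_n$; what remains is to read off its rows as $\varphi_\beta$ applied to exactly the generators of $\mathcal{M}$ listed in the statement.

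To perform this identification I fix the lexicographic ordering of the $\mathbb{Z}$-basis of $\mathcal{I}$ from Theorem \ref{teorema8}(a), setting $f_{jl+i+1}=w_j\sigma^i(x)$ for $0\leq j\leq k-1$ and $0\leq i\leq l-1$, where $l$ is the odd factor in $n=kl$. Under this ordering, left-multiplication by (\ref{eq6}) produces the rows $-f_1-f_2,\, f_1-f_2,\, f_2-f_3,\ldots,\, f_{n-1}-f_n$. The first row is $-w_0(x+\sigma(x))$. A row $f_{a-1}-f_a$ with $a=jl+i+1$ and $1\leq i\leq l-1$ (an \emph{in-block} row) equals $w_j(\sigma^{i-1}(x)-\sigma^i(x))$, and a \emph{transition} row with $a=(j+1)l+1$ and $0\leq j\leq k-2$ equals $w_j\sigma^{l-1}(x)-w_{j+1}x$. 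Since $\mathrm{ord}(\sigma)=l$ and $n=kl$ give $\sigma^{n-1}(x)=\sigma^{l-1}(x)$, the transition rows coincide with the middle family $\{w_i\sigma^{n-1}(x)-w_{i+1}x:0\leq i\leq k-2\}$; reindexing $i\mapsto i-1$, the in-block rows match $\{w_j(\sigma^i(x)-\sigma^{i+1}(x)):0\leq j\leq k-1,\,0\leq i\leq l-2\}$, which generate the third family of $\mathcal{M}$ since any $\sigma^i(x)-\sigma^{i+1}(x)$ with $i\geq l-1$ is already a $\mathbb{Z}$-combination of the cases $0\leq i\leq l-2$ by telescoping and periodicity of $\sigma$.

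A rank count confirms $1+k(l-1)+(k-1)=kl=n$, so the product has the correct rank and its rows coincide, up to sign, with a generating set of $\varphi_\beta(\mathcal{M})$. Combined with the rotation property of the Kronecker product, this proves $\varphi_\beta(\mathcal{M})$ is a rotated version of $D_n$.

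The main obstacle is purely notational bookkeeping: choosing the lexicographic ordering of the basis $\{w_j\sigma^i(x)\}$ so that the band pattern of (\ref{eq6}) decomposes cleanly into the three families of generators defining $\mathcal{M}$, and recognising that the transition rows between successive $w_j$-blocks produce precisely the middle family via $\sigma^{n-1}(x)=\sigma^{l-1}(x)$. Once these identifications are in place, the verification reduces to row-by-row matrix multiplication exactly paralleling Theorems \ref{teorema1} and \ref{teorema5}.
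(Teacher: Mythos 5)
Your proposal is correct and follows essentially the same route as the paper: the paper's proof is precisely to multiply the $D_n$ generator matrix in (\ref{eq6}) by the rotation matrix of Theorem \ref{teorema8}(c), as in the proof of Theorem \ref{teorema1}. Your additional bookkeeping (the lexicographic ordering of the basis of $\mathcal{I}$, the identification of the transition rows via $\sigma^{n-1}(x)=\sigma^{l-1}(x)$, and the redundancy of the generators with $i\geq l-1$) simply makes explicit the row-by-row verification the paper leaves implicit.
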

\begin{proof} It is enough to multiply the matrix in \ref{eq6} (of dimension $n\times n$) by the matrix of the item (c) of the Theorem \ref{teorema8}, as in the proof of the Theorem \ref{teorema1}. Since the last matrix is a rotation matrix, then the lattice is a rotated version of $D_n$.
\end{proof}

In the Theorems \ref{teorema8} and \ref{teorema9} were constructed rotated versions of $\mathbb{Z}^n$ and $D_n$. They were constructed via $\mathbb{Z}$-modules of the compositum $\mathbb{KL}$. Since $\mathbb{K}$ and $\mathbb{L}$ are totally real number fields then $\mathbb{KL}$ is too. Therefore, the lattices $\mathbb{Z}^n$ and $D_n$ have maximum diversity. So, we can calculate the minimum product distance of each one of these lattices restricted to some conditions using known results for the cases $k=2^{m-2}$ ($m\geq 3$) and $l$ odd. In the following, consider $\beta=p^{-2}\left(1/k-\theta/(2k)\right)$.

\begin{lema}[\cite{rib}, Chapter 14, Item W] \label{lema10} The discriminant of the compositum $\mathbb{K}\mathbb{L}$ is $d_\mathbb{K}^kd_\mathbb{L}^{l}$.
\end{lema}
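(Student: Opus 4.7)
The plan is to derive the formula as an application of the standard result on discriminants of composita of linearly disjoint number fields with coprime discriminants.

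First, I would check the linear disjointness $[\mathbb{K}\mathbb{L}:\mathbb{Q}]=kl=n$. We have $\mathbb{K}\subset\mathbb{Q}(\zeta_p)$ of degree $l$ and $\mathbb{L}\subset\mathbb{Q}(\zeta_{2^m})$ of degree $k$; since $\gcd(p,2^m)=1$, the cyclotomic intersection satisfies $\mathbb{Q}(\zeta_p)\cap\mathbb{Q}(\zeta_{2^m})=\mathbb{Q}$, so in particular $\mathbb{K}\cap\mathbb{L}=\mathbb{Q}$ (as already observed just before the statement). Because $\mathbb{K}/\mathbb{Q}$ and $\mathbb{L}/\mathbb{Q}$ are abelian (and hence Galois), $\mathbb{K}\cap\mathbb{L}=\mathbb{Q}$ implies linear disjointness, giving $[\mathbb{K}\mathbb{L}:\mathbb{Q}]=kl=n$.

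Second, I would verify that the discriminants $d_\mathbb{K}$ and $d_\mathbb{L}$ are coprime. In the proof of Corollary \ref{corol1} it was shown that $\mathbb{K}$ has conductor $p$ and $d_\mathbb{K}=p^{l-1}$, a power of the odd prime $p$. By Lemma \ref{lema3}, $d_\mathbb{L}=2^{(m-1)k-1}$ is a power of $2$. Hence $\gcd(d_\mathbb{K},d_\mathbb{L})=1$.

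Third, I would invoke the classical theorem on composita (as cited from Ribenboim, Chapter 13/14, Item W): if $F$ and $F'$ are number fields that are linearly disjoint over $\mathbb{Q}$ and satisfy $\gcd(d_F,d_{F'})=1$, then $\mathcal{O}_{FF'}=\mathcal{O}_F\mathcal{O}_{F'}$ and
\[
d_{FF'}=d_F^{[F':\mathbb{Q}]}\,d_{F'}^{[F:\mathbb{Q}]}.
\]
Applying this with $F=\mathbb{K}$ and $F'=\mathbb{L}$ and substituting $[\mathbb{L}:\mathbb{Q}]=k$, $[\mathbb{K}:\mathbb{Q}]=l$ yields $d_{\mathbb{K}\mathbb{L}}=d_\mathbb{K}^{k}\,d_\mathbb{L}^{l}$, as claimed.

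The main (and only nontrivial) obstacle is the underlying identification $\mathcal{O}_{\mathbb{K}\mathbb{L}}=\mathcal{O}_\mathbb{K}\mathcal{O}_\mathbb{L}$ built into the classical theorem; however, since the result is being cited from Ribenboim and the two hypotheses (linear disjointness and coprimality of discriminants) are verified above, no further work is needed and the lemma follows immediately.
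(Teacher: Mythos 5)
Your proposal is correct and matches the paper, which offers no proof of its own but simply cites the compositum-discriminant theorem from Ribenboim; your verification of the two hypotheses (linear disjointness via $\mathbb{K}\cap\mathbb{L}=\mathbb{Q}$ with both extensions abelian, and coprimality via $d_\mathbb{K}=p^{l-1}$ with $p$ odd versus $d_\mathbb{L}=2^{(m-1)k-1}$) is exactly the routine checking the paper leaves implicit. Note only that the discriminant computation $d_\mathbb{K}=p^{l-1}$ from the proof of Corollary \ref{corol1} does not depend on the hypothesis $\sigma(x)/x\in\mathbb{Z}[\zeta_p]$, so your use of it here is unconditional, as needed.
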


\begin{teorema}\label{teorema15} If $\sigma(x)/x\in\mathbb{Z}[\zeta_p]$ then the ideal $\mathcal{I}=I\mathcal{O}_\mathbb{L}$ is principal generated by $x$ and the minimum product distance of the lattice $\varphi_\beta(\mathcal{I})\simeq\mathbb{Z}^n$ is given by $p^{\frac{k-n}{2}}2^{\frac{l-n(m-1)}{2}}$. Besides that,
\begin{equation}\label{eq8}
|N_\mathbb{KL}(x)|=p^{\frac{n+k}{2}}.
\end{equation}
\end{teorema}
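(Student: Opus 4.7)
The plan is to deduce everything from results already proved in the paper. First I would establish the principal-generator claim: by hypothesis $\sigma(x)/x \in \mathbb{Z}[\zeta_p]$, so Theorem \ref{teorema2} gives $I = x\mathcal{O}_\mathbb{K}$. Interpreting the product ideal $\mathcal{I} = IJ$ of Theorem \ref{teorema8} inside $\mathcal{O}_{\mathbb{KL}}$ with $J = \mathcal{O}_\mathbb{L}$, the inclusion $\mathcal{I} \subset x\mathcal{O}_{\mathbb{KL}}$ is immediate because $I \subset x\mathcal{O}_{\mathbb{KL}}$ and $\mathcal{O}_\mathbb{L} \subset \mathcal{O}_{\mathbb{KL}}$. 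Conversely, $x = x\cdot 1 \in \mathcal{I}$ forces $x\mathcal{O}_{\mathbb{KL}} \subset \mathcal{I}$. Hence $\mathcal{I} = x\mathcal{O}_{\mathbb{KL}}$ is principal with generator $x$.

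Next I would compute $d_{p,min}(\varphi_\beta(\mathcal{I}))$ via the principal-ideal formula recalled in Section \ref{sec0}, namely $d_{p,min}(\varphi_\beta(\mathcal{I})) = vol(\varphi_\beta(\mathcal{I}))/\sqrt{d_{\mathbb{KL}}}$. By Theorem \ref{teorema8}(b) the lattice $\varphi_\beta(\mathcal{I})$ is a rotated $\mathbb{Z}^n$, so its volume equals $1$. For the discriminant, Lemma \ref{lema10} yields $d_{\mathbb{KL}} = d_\mathbb{K}^{k} d_\mathbb{L}^{l}$; substituting $d_\mathbb{K} = p^{l-1}$ (the conductor argument from the proof of Corollary \ref{corol1}, now applied to $\mathbb{K}$ of odd degree $l$) together with $d_\mathbb{L} = 2^{(m-1)k-1}$ from Lemma \ref{lema3}, and using $n = kl$, one gets $d_{\mathbb{KL}} = p^{n-k}\cdot 2^{(m-1)n-l}$. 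Taking the reciprocal square root gives exactly $p^{(k-n)/2} 2^{(l-n(m-1))/2}$.

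Finally, to extract $|N_{\mathbb{KL}}(x)|$ I would use the other closed-form from Section \ref{sec0}: $d_{p,min}(\varphi_\beta(\mathcal{I})) = \sqrt{N_{\mathbb{KL}}(\beta)}\cdot \min_{0\neq y\in\mathcal{I}}|N_{\mathbb{KL}}(y)|$. Since $\mathcal{I} = x\mathcal{O}_{\mathbb{KL}}$ and every nonzero $r\in\mathcal{O}_{\mathbb{KL}}$ has $|N_{\mathbb{KL}}(r)| \geq 1$, this minimum is attained at $y = x$. To evaluate $N_{\mathbb{KL}}(\beta)$ I would factor $\beta = p^{-2}\beta'$ with $\beta' = 1/k - \theta/(2k) \in \mathbb{L}$, giving $N_{\mathbb{KL}}(\beta) = p^{-2n}\cdot N_\mathbb{L}(\beta')^{l}$; the value $|N_\mathbb{L}(\beta')| = 1/d_\mathbb{L} = 2^{1-(m-1)k}$ is read off from the volume formula of Section \ref{sec0} applied to the rotated $\mathbb{Z}^k$ of Theorem \ref{teorema7} (where $vol = 1$ and $N(\mathcal{O}_\mathbb{L}) = 1$). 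Solving $|N_{\mathbb{KL}}(x)| = d_{p,min}/\sqrt{|N_{\mathbb{KL}}(\beta)|}$ then produces the announced $p^{(n+k)/2}$.

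The most delicate point is the first paragraph: identifying $\mathcal{I}$ cleanly as the principal $\mathcal{O}_{\mathbb{KL}}$-ideal generated by $x$, given that $I$ and $J$ originally live in different rings. Everything else reduces to careful exponent arithmetic combining Lemmas \ref{lema3} and \ref{lema10} with the two closed-form expressions for $d_{p,min}$ recorded in Section \ref{sec0}, together with Theorems \ref{teorema7} and \ref{teorema8}(b).
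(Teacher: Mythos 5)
Your proposal is correct, and its first two parts coincide with the paper's own argument: the paper likewise invokes Theorem \ref{teorema2} to get $I=x\mathcal{O}_\mathbb{K}$ and then identifies $\mathcal{I}=I\mathcal{O}_\mathbb{L}=x\mathcal{O}_\mathbb{K}\mathcal{O}_\mathbb{L}=x\mathcal{O}_{\mathbb{KL}}$ (your two-inclusion argument just makes this one line explicit), and it computes $d_{p,min}$ exactly as you do, from the principal-ideal formula $\sqrt{D/d_{\mathbb{KL}}}$ with $D=1$, Lemma \ref{lema10}, $d_\mathbb{K}=p^{l-1}$ and $d_\mathbb{L}=2^{(m-1)k-1}$. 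Where you genuinely diverge is the final claim \ref{eq8}: the paper simply uses transitivity of the norm, $|N_\mathbb{KL}(x)|=|N_\mathbb{K}(N_{\mathbb{KL}/\mathbb{K}}(x))|=|N_\mathbb{K}(x)|^{k}=p^{\frac{(l+1)k}{2}}=p^{\frac{n+k}{2}}$, quoting $|N_\mathbb{K}(x)|=p^{\frac{l+1}{2}}$ from Corollary \ref{corol1}, whereas you solve for $|N_\mathbb{KL}(x)|$ from the twisted-norm formula $d_{p,min}=\sqrt{N_\mathbb{KL}(\beta)}\min_{0\neq y\in\mathcal{I}}|N_\mathbb{KL}(y)|$, noting the minimum is attained at the generator $x$ and evaluating $N_\mathbb{KL}(\beta)=p^{-2n}N_\mathbb{L}(1/k-\theta/(2k))^{l}$ with $|N_\mathbb{L}(1/k-\theta/(2k))|=1/d_\mathbb{L}$ read off from the volume of the rotated $\mathbb{Z}^k$. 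Your route is valid and self-contained (it mirrors how the paper itself extracted $|N_\mathbb{K}(x)|$ in Corollary \ref{corol1}, and your $N_\mathbb{KL}(2-\theta)$-type computation essentially anticipates what the paper does later in Theorem \ref{teorema11}), but it is longer than necessary; the transitivity argument gets \ref{eq8} in one step and does not require knowing $N_\mathbb{KL}(\beta)$ at all. Both computations agree on the value $p^{\frac{n+k}{2}}$.
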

\begin{proof} Due to the Theorem \ref{teorema2}, $I=x\mathcal{O}_\mathbb{K}$. This implies that $x\mathcal{O}_{\mathbb{K}\mathbb{L}}=x\mathcal{O}_\mathbb{K}\mathcal{O}_\mathbb{L}=I\mathcal{O}_\mathbb{L}$, this is, $\mathcal{I}=I\mathcal{O}_\mathbb{L}$ is a principal ideal generated by $x$. Now, of the Lemma \ref{lema10} follows that
$$d_{p,min}(\varphi_\beta(\mathcal{I}))=\sqrt{\frac{D}{d_{\mathbb{KL}}}}=\frac{1}{\sqrt{d_\mathbb{K}^kd_\mathbb{L}^l}}=p^{\frac{(1-l)k}{2}}2^{\frac{l-l(m-1)k}{2}}.$$
Finally, since $x\in\mathbb{K}$ and  $|N_\mathbb{K}(x)|=p^{\frac{l+1}{2}}$ (Corollary \ref{corol1}), follows from the transitivity propertie of the norm that $|N_\mathbb{KL}(x)|=|N_\mathbb{K}\left(N_{\mathbb{KL}/\mathbb{K}}(x)\right)|=|N_\mathbb{K}(x)|^k=p^{\frac{(l+1)k}{2}}=p^{\frac{n+k}{2}}.$
\end{proof}

\begin{teorema}\label{teorema11} If $u=\sigma(x)/x\in\mathbb{Z}[\zeta_p]$ then the minimum product distance of $\varphi_{\beta}(\mathcal{M})\simeq D_n$ satisfies
	$$d_{p,min}(\varphi_\beta(\mathcal{M}))\geq 2^{\frac{l-n(m-3)}{2}}p^{\frac{k-n}{2}}$$
\end{teorema}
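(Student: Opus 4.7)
My plan is to follow the strategy of Corollary \ref{corol3}, which handled the pure odd case: use the sublattice inclusion $\mathcal{M} \subseteq \mathcal{I}$ to reduce the lower bound to Theorem \ref{teorema15}'s explicit formula for $d_{p,min}(\varphi_\beta(\mathcal{I}))$. This immediately provides the factor $p^{(k-n)/2}$ and the bulk of the exponent of $2$.

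The first step is to check $\mathcal{M} \subseteq \mathcal{I}$: each generator of $\mathcal{M}$ listed in Theorem \ref{teorema9}---the term $w_0(x+\sigma(x))$, the cross-block terms $w_i\sigma^{n-1}(x) - w_{i+1}x$, and the within-block terms $w_j(\sigma^i(x) - \sigma^{i+1}(x))$---is visibly a $\mathbb{Z}$-linear combination of the basis $\{w_i\sigma^j(x)\}_{0 \leq i < k,\, 0 \leq j < l}$ of $\mathcal{I}$ exhibited in Theorem \ref{teorema8}(a), so $\varphi_\beta(\mathcal{M}) \subseteq \varphi_\beta(\mathcal{I})$ by $\mathbb{Z}$-linearity of $\varphi_\beta$. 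Passing to a sublattice can only enlarge the product-distance infimum, so together with Theorem \ref{teorema15} this yields
\[
d_{p,min}(\varphi_\beta(\mathcal{M})) \geq d_{p,min}(\varphi_\beta(\mathcal{I})) = 2^{(l-n(m-1))/2}\, p^{(k-n)/2}.
\]

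The harder step is sharpening this sublattice bound by the additional factor of $2^n$ needed for the stated exponent $(l - n(m-3))/2$. My approach would be to analyse the submodule $\mathcal{N} = x^{-1}\mathcal{M}$, which sits inside $\mathcal{O}_{\mathbb{K}\mathbb{L}}$ with index $2$, and to show that every nonzero $z \in \mathcal{N}$ satisfies $|N_{\mathbb{K}\mathbb{L}}(z)| \geq 2^n$; this is the precise quantitative content of the desired improvement, since $|N_{\mathbb{K}\mathbb{L}}(x)| = p^{(n+k)/2}$ by Theorem \ref{teorema15}. The key input should be the $D_k$-side observation (from Theorem \ref{teorema5}) that the $\mathcal{O}_\mathbb{L}$-component already carries the principal ideal $\theta_1\mathcal{O}_\mathbb{L}$ of norm $2$; combined with transitivity $N_{\mathbb{K}\mathbb{L}}(z) = N_\mathbb{L}(N_{\mathbb{K}\mathbb{L}/\mathbb{L}}(z))$ and the explicit shape of the generators of $\mathcal{N}$, one should be able to track the $2$-adic valuation of $N_{\mathbb{K}\mathbb{L}/\mathbb{L}}(z) \in \mathcal{O}_\mathbb{L}$ and extract the missing factor. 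Carrying out this $2$-adic bookkeeping cleanly---in particular handling the contribution of the generator $w_0(x+\sigma(x)) = x(1+u)$, where no $\theta_1$-divisibility is visible a priori---is where I expect the main difficulty to lie.
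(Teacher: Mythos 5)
Your first step coincides with the paper's actual argument: the paper bounds $\min_{0\neq y\in\mathcal{M}}|N_{\mathbb{KL}}(y)|$ from below by $\min_{0\neq y\in\mathcal{I}}|N_{\mathbb{KL}}(y)|=|N_{\mathbb{KL}}(x)|=p^{\frac{n+k}{2}}$ (using that $\mathcal{I}=x\mathcal{O}_{\mathbb{KL}}$, Theorem \ref{teorema15}), computes $N_{\mathbb{KL}}(\beta)=p^{-2n}(2k)^{-n}2^{l}$ via $N_{\mathbb{KL}}(2-\theta)=2^{l}$, and multiplies; this is the same sublattice-plus-Theorem-\ref{teorema15} reduction you perform, and it correctly gives $d_{p,min}(\varphi_\beta(\mathcal{M}))\geq 2^{\frac{l-n(m-1)}{2}}p^{\frac{k-n}{2}}$, since $2k=2^{m-1}$.

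Your second step --- gaining an extra factor $2^{n}$ by proving $|N_{\mathbb{KL}}(z)|\geq 2^{n}$ for every nonzero $z\in x^{-1}\mathcal{M}$ --- cannot be carried out, and the obstruction is exactly the generator you flagged. Whenever $1+u$ (or $1-u$) is a unit of $\mathcal{O}_{\mathbb{K}}$, which is the hypothesis of Corollary \ref{teorema10} and holds in the paper's own examples (e.g.\ the $D_{14}$ construction and the cases in Table \ref{tab1}), the element $1\pm u\in x^{-1}\mathcal{M}$ has norm of absolute value $1$, so $\min_{0\neq y\in\mathcal{M}}|N_{\mathbb{KL}}(y)|=p^{\frac{n+k}{2}}$ and the minimum product distance of $\varphi_\beta(\mathcal{M})$ is exactly $2^{\frac{l-n(m-1)}{2}}p^{\frac{k-n}{2}}$, the same value as for $\varphi_\beta(\mathcal{I})$. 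The exponent $\frac{l-n(m-3)}{2}$ in the statement arises from an arithmetic slip in the paper's final equality: since $(2k)^{-n/2}=2^{-\frac{n(m-1)}{2}}$, one has $p^{-n}(2k)^{-n/2}2^{l/2}p^{\frac{n+k}{2}}=2^{\frac{l-n(m-1)}{2}}p^{\frac{k-n}{2}}$, not $2^{\frac{l-n(m-3)}{2}}p^{\frac{k-n}{2}}$. The printed bound would make the index-two sublattice $D_n$ exceed $\mathbb{Z}^n$ by a factor $2^{n}$ in minimum product distance, which is incompatible both with Corollary \ref{teorema10} (where the minimum is attained by $x(1\pm u)$) and with the paper's own pure power-of-two case, where $D_k$ gains only a factor $2$ over $\mathbb{Z}^k$. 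So your bound $2^{\frac{l-n(m-1)}{2}}p^{\frac{k-n}{2}}$ is the correct output of this method; the missing $2^{n}$ is not a gap you should try to close by $2$-adic bookkeeping but an error in the stated exponent.
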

\begin{proof} Since $\mathcal{I}$ is a principal ideal generated by $x$ and using \ref{eq8},
$$\min_{0\neq y\in \mathcal{M}} |N_\mathbb{KL}(y)| \geq \min_{0\neq y\in \mathcal{I}} |N_\mathbb{KL}(y)|= |N_\mathbb{KL}(x)|=p^{\frac{n+k}{2}}.$$
Now, $N_\mathbb{KL}(\beta)=p^{-2n}(2k)^{-n}N_\mathbb{KL}(2-\theta)$. Since $2-\theta = 2-\omega-\omega^{-1}=(1-\omega)(1-\omega^{-1})$ then
$$N_{\mathbb{Q}(\omega)}(2-\theta)=N_{\mathbb{Q}(\omega)}\left((1-\omega)(1-\omega^{-1})\right)=\left(N_{\mathbb{Q}(\omega)}(1-\omega)\right)^2=2^2$$
and, using the transitivity propertie of the norm, $2^2=N_{\mathbb{Q}(\omega)}(2-\theta)=N_\mathbb{L}\left(N_{\mathbb{Q}(\omega)/\mathbb{L}}(2-\theta)\right)=N_\mathbb{L}(2-\theta)^2$, which implies that $N_\mathbb{L}(2-\theta)=2$. So, again using the transitivity propertie of the norm,
$$N_\mathbb{KL}(2-\theta)=N_\mathbb{L}\left(N_{\mathbb{KL}/\mathbb{L}}(2-\theta)\right)=N_\mathbb{L}(2-\theta)^l=2^l.$$
Therefore,
$$d_{p,min}(\varphi_\beta(\mathcal{M}))=\sqrt{N_\mathbb{KL}(\beta)}\min_{0\neq y\in\mathcal{M}} |N_\mathbb{KL}(y)|\geq p^{-n}(2k)^{-n/2}2^{l/2}p^{\frac{n+k}{2}}=2^{\frac{l-n(m-3)}{2}}p^{\frac{k-n}{2}}.$$
\end{proof}

\begin{corol}\label{teorema10} If $u=\sigma(x)/x\in\mathbb{Z}[\zeta_p]$ and if $1+u$ or $1-u$ is an unit in $\mathcal{O}_\mathbb{K}$ then the minimum product distance of $\varphi_{\beta}(\mathcal{M})\simeq D_n$ is equal to $2^{\frac{l-n(m-3)}{2}}p^{\frac{k-n}{2}}$.
\end{corol}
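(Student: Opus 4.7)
The plan is to supply the matching upper bound for the lower bound already established in Theorem \ref{teorema11}. As in the odd-dimensional analogue (Corollary \ref{teorema3}), one exhibits an explicit nonzero element of $\mathcal{M}$ whose $\mathbb{KL}$-norm attains the value forced by Theorem \ref{teorema11}, after which the asserted equality is immediate.

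The natural candidate is $y=x+\sigma(x)$ when $1+u$ is a unit, or $y=x-\sigma(x)$ when $1-u$ is a unit. Both elements lie in $\mathcal{M}$: the first is literally the generator $w_0(x+\sigma(x))$ with $w_0=1$, and the second is the generator $w_j(\sigma^i(x)-\sigma^{i+1}(x))$ with $j=0$ and $i=0$. Writing $y=x(1\pm u)$ and invoking the hypothesis $1\pm u\in\mathcal{O}_\mathbb{K}^\times$ (so $|N_\mathbb{K}(1\pm u)|=1$) together with Corollary \ref{corol1}, we obtain
\[
|N_\mathbb{K}(y)|=|N_\mathbb{K}(x)|\cdot |N_\mathbb{K}(1\pm u)|=p^{\frac{l+1}{2}}.
\]
Since $y\in\mathbb{K}$ and $[\mathbb{KL}:\mathbb{K}]=k$, the transitivity of the norm (applied exactly as at the end of the proof of Theorem \ref{teorema15}) yields
\[
|N_\mathbb{KL}(y)|=|N_\mathbb{K}(y)|^k=p^{\frac{(l+1)k}{2}}=p^{\frac{n+k}{2}}.
\]

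The proof of Theorem \ref{teorema11} established $\min_{0\neq y\in\mathcal{M}}|N_\mathbb{KL}(y)|\geq p^{(n+k)/2}$ via the containment $\mathcal{M}\subset\mathcal{I}$; the candidate above meets this bound with equality, so the minimum is attained. Substituting into
\[
d_{p,\min}(\varphi_\beta(\mathcal{M}))=\sqrt{N_\mathbb{KL}(\beta)}\min_{0\neq y\in\mathcal{M}}|N_\mathbb{KL}(y)|
\]
and reusing the computation $\sqrt{N_\mathbb{KL}(\beta)}\cdot p^{(n+k)/2}=2^{\frac{l-n(m-3)}{2}}p^{\frac{k-n}{2}}$ already carried out in the proof of Theorem \ref{teorema11} converts that inequality into the claimed equality. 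There is essentially no obstacle; the only checks are that the chosen $y$ lies in $\mathcal{M}$ (it is one of the listed generators) and that norm transitivity is applied to the tower $\mathbb{Q}\subset\mathbb{K}\subset\mathbb{KL}$ using $y\in\mathbb{K}$, both of which are routine.
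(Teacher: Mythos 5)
Your proposal is correct and follows essentially the same route as the paper: exhibit the generator $x+\sigma(x)$ (or $x-\sigma(x)$) of $\mathcal{M}$, use the unit hypothesis to get $|N_\mathbb{KL}(x\pm\sigma(x))|=|N_\mathbb{KL}(x)|=p^{\frac{n+k}{2}}$, and combine this upper bound with the lower bound of Theorem \ref{teorema11}. The only cosmetic difference is that you pass through $|N_\mathbb{K}(y)|$ and norm transitivity, while the paper works directly with $N_\mathbb{KL}$ and Equation \ref{eq8}; the content is the same.
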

\begin{proof} This proof will be made considering that $1+u$ is an unit. The case in which $1-u$ is an unit can be done analogously. If $1+u$ is an unit in $\mathcal{O}_\mathbb{LK}$ (because it is an unit in $\mathcal{O}_\mathbb{K}$) then $|N_\mathbb{KL}(x+\sigma(x))|=|N_\mathbb{KL}(x(1+u))|=|N_\mathbb{KL}(x)|.$ Note that the first element of the set of generators of $\mathcal{M}$ enunciated in the Theorem \ref{teorema9} is $w_0(x+\sigma(x))=x+\sigma(x)$ (because $w_0=1$). So, by \ref{eq8}, we have
$$\min_{0\neq y\in \mathcal{M}} |N_\mathbb{KL}(y)|\leq |N_\mathbb{KL}(w_0(x+\sigma(x)))|=|N_\mathbb{KL}(x)|.$$
This proves that $d_{p,min}(\varphi_\beta(\mathcal{M}))\leq 2^{\frac{l-n(m-3)}{2}}p^{\frac{k-n}{2}}$ as in the proof of the Theorem \ref{teorema11}. Finally, the Theorem \ref{teorema11} also concludes this proof.
\end{proof}

\begin{ex} To construct $\mathbb{Z}^{14}$ ($n=2.7$, $k=2$, $l=7$) we will use the field $\mathbb{K}$ constructed implicitily in the Example \ref{ex1} and the field $\mathbb{L}=\mathbb{Q}(\sqrt{2})$. The ideal that produces a rotated version of $\mathbb{Z}^{14}$ is $\mathcal{I}= \langle x,\sigma(x),\ldots,\sigma^6(x),(1+\sqrt{2})x,(1+\sqrt{2})\sigma(x),\ldots,(1+\sqrt{2})\sigma^6(x)\rangle$, with $\beta=29^{-2}(2-\sqrt{2})/4$. It has maximum diversity and its minimum product distance is $2^{-21/2}29^{-6}$. In turn, $\mathcal{M}=\langle x+\sigma(x),x-\sigma(x),\ldots,\sigma^{5}(x)-\sigma^{6}(x),\sigma^6(x)-(1+\sqrt{2})x,(1+\sqrt{2})(x-\sigma(x)),\ldots,(1+\sqrt{2})(\sigma^{5}(x)-\sigma^{6}(x)) \rangle$ generates $D_{14}$ having maximum diversity and minimum product distance given by $2^{7/2}29^{-6}$, because the hypothesis of the Corollary \ref{teorema10} is true in this example.
\end{ex}

\section{Comparisons and conclusions}\label{sec5}

In this paper we worked in parallel with rotated versions of $\mathbb{Z}^n$ and $D_n$. Comparing these lattices with respect to packing density, it is a known fact that $D_n$ has better perfomance than $\mathbb{Z}^n$ when $n>2$, because the center density of $D_n$ is $2^{-(n+2)/2}$, while the center density of $\mathbb{Z}^n$ is equal to $2^{-n}$. With respect to minimum product distance of the lattices constructed here, to make a fair comparison, it is necessary that $\mathbb{Z}^n$ and its sublattice $D_n$ have same minimum norm or same volume. Since this not occurs, we need define comparative forms of the minimum product distance:

\begin{defin} (1) The relative minimum product distance $d_{p,rel}$ of a complete lattice $\Lambda\subset\mathbb{R}^n$ is defined by $d_{p,min}/\lambda^n$, where $\lambda$ is the minimum norm of $\Lambda$.
(2) The normalized minimum product distance $d_{p,nor}$ of a complete lattice $\Lambda\subset\mathbb{R}^n$ is defined by $d_{p,min}/vol(\Lambda)$.
\end{defin}

It is usual to consider lattices having same volume (or same determinant), but since in our case the comparison is between two lattices having different volumes, we need use some of these two definitions. The relative minimum product distance is used in \cite{grasi2} also to compare $\mathbb{Z}^n$ and $D_n$. However, in other contexts it seems better use the normalized minimum product distance, like when someone needs a lattice having a greater number of points inside a same convex polytope. Intuitively, suppose that in a lattice we put tiny cubes centered in the lattice points. Refining this lattice, the sum of the volume of cubes inside the fundamental region of the lattice approximates of its volume, although volume of the refined lattices became smaller. This procedure remember the basic principle of the Riemann integral. Therefore, higher volume, lower number of points inside a fixed convex polytope region.

\begin{ex} Consider in $\mathbb{R}^2$ the lattice $\Lambda_1$ generated by $\{(\sqrt{3}/2,-1/2), (1/2,\sqrt{3}/2)\}$ and its sublattice $\Lambda_2$ generated by $\{(\sqrt{3}/2,-1/2), (1,\sqrt{3})\}$. Note that $\Lambda_1$ has more points than $\Lambda_2$ in the fundamental region of $\Lambda_2$. So, if the parameter for choosing the lattice is the largest number of points in a same region, is better choose $\Lambda_1$. To force $\Lambda_2$ to be better than $\Lambda_1$ in this analysis we consider the scaled version $(1/vol(\Lambda_2))\Lambda_2=(1/2)\Lambda_2$, that has more points than $\Lambda_1$ in the fundamental region of $\Lambda_2$. As $vol(\Lambda_2)=2\neq 1= vol(\Lambda_1)$ then $d_{p,nor}(\Lambda_2)\neq d_{p,nor}(\Lambda_1)$ if we rescale $\Lambda_1$ and $\Lambda_2$ to have same minimum product distance. However, note that $\Lambda_1$ and $\Lambda_2$ have the same minimum norm $\lambda$, implying that $d_{p,rel}(\Lambda_1)=d_{p,rel}(\Lambda_2)$ if we force them to have same minimum product distance. Therefore, $d_{p,nor}$ is more effective to compare $\Lambda_1$ and $\Lambda_2$ than $d_{p,rel}$ in this case.
\end{ex}
 
Firstly, we will use the relative minimum product distance to compare the constructed lattices $\mathbb{Z}^n$ and $D_n$. In this case we can make a parallel with the results in \cite{grasi2}. After, in the end of this section, we compare these two lattices using normalized minimum product distance and present a table comparing them for $3\leq n \leq 10$.

The minimum distance of $\mathbb{Z}^n$ is $1$. However, $d_{p,rel}$ of $\mathbb{Z}^n$ coincides with $d_{p,min}$ in each construction that we got calculate the minimum product distance. In turn, the minimum distance of $D_n$ is $\sqrt{2}$, which implies that $d_{p,rel}=2^{-n/2}d_{p,min}$ in each construction that we got calculate the minimum product distance. So, in these cases, if $n>1$ is a odd number,
$$\frac{\sqrt[n]{d_{p,rel}(\mathbb{Z}^n)}}{\sqrt[n]{d_{p,rel}(D_n)}}=\sqrt{2}$$
if $n>1$ is a power of two,
$$\frac{\sqrt[n]{d_{p,rel}(\mathbb{Z}^n)}}{\sqrt[n]{d_{p,rel}(D_n)}}=2^{\frac{1}{2}-\frac{1}{n}}$$
and if $n$ is a product of a odd number ($>1$) by a power of two ($>1$) then
$$\frac{\sqrt[n]{d_{p,rel}(\mathbb{Z}^n)}}{\sqrt[n]{d_{p,rel}(D_n)}}=\frac{\sqrt{2}}{2}$$
while
\begin{equation}\label{eq9}
\lim_{\eta\longrightarrow \infty} \frac{\sqrt[n]{\delta(\mathbb{Z}^n)}}{\sqrt[n]{\delta(D_n)}}=0.
\end{equation}
Therefore, it is better use $D_n$ than $\mathbb{Z}^n$ when someone needs a lattice having good performance simultaneously for Rayleigh fading channel and for gaussian channel in high dimensions. Notably, observe that if $n$ is a product of a odd number ($>1$) by a power of two ($>1$) then $d_{p,rel}$ and $\delta$ of $D_n$ are better than those of $\mathbb{Z}^n$.

Throughout this work we were able to calculate the minimum product distance of the lattices $\mathbb{Z}^l$ and $D_l$, with odd $l$, only when certain conditions were valid (see Corollary \ref{corol1} and Corollary \ref{teorema3}). In $\mathbb{Z}^l$ we needed $u=\sigma(x)/x$ to be an algebraic integer, while in $D_l$ we needed $1+u$ or $1-u$ to be unit. The table \ref{tab1} compares the results of the $l$-th root of the relative minimum product distance of $\mathbb{Z}^l$ with that of $D_l$ for $l$ between $3$ and $15$ using the least prime number $p$ such that $p\equiv 1~(mod~l)$, except for $l=13$ (due to remarks \ref{obs3} and \ref{obs2}). In the same table we can compare the center density of the lattices in each dimension. The column "Unit" presents which of $1+u$ or $1-u$ is unit.

\begin{table}
\begin{center}
\caption{Comparison between relative minimum product distance of $\mathbb{Z}^n$ and $D_n$ and between their center density in some odd dimensions $n$}\label{tab1}
\begin{tabular}{|c|c|c|c|c|c|c|c|c|}
\hline
$l$ & $p$ & $r$ & $u\in \mathbb{Z}[\zeta_p]$? & Unit & $\sqrt[l]{d_{p,rel}(\mathbb{Z}^l)}$ & $\sqrt[l]{d_{p,rel}(D_l)}$ & $\delta(\mathbb{Z}^l)$ & $\delta(D_l)$ \\ \hline
$3$ & $7$ & $3$ & Yes (Theorem \ref{teorema4}) & $1\pm u$ & $0.5227...$ & $0.3696...$ & $0.125$ & $0.176776...$\\ \hline
$5$ & $11$ & $2$ & Yes (Theorem \ref{teorema4}) & $1\pm u$ & $0.3832...$ & $0.2709...$& $0.031250...$ & $0.088388...$ \\ \hline
$7$ & $29$ & $2$ & Yes & $1-u$ & $0.2361...$ & $0.1670...$& $0.007812...$ & $0.044194...$ \\ \hline
$9$ & $19$ & $2$ & Yes (Theorem \ref{teorema4}) &$1\pm u$ & $0.2701...$ & $0.1910...$& $0,2701...$ & $0,1910...$\\ \hline
$11$ & $23$ & $5$ & Yes (Theorem \ref{teorema4}) & $1\pm u$ & $0.2404...$ & $0.1700...$& $0,000488...$ & $0.011048...$\\ \hline
$15$ & $31$ & $3$ & Yes (Theorem \ref{teorema4}) & $1\pm u$ & $0.2013...$ & $0.1424...$& $0,000030...$ & $0.002762...$\\ \hline

\end{tabular}
\end{center}
\end{table}

The cases $l=5$, $l=9$ and $l=11$ were stutied in \cite{grasi2} too. Note that $\mathbb{Z}^l$ and $D_l$ obtained here and in \cite{grasi2} have the same relative minimum product distance if $1+u$ or $1-u$ is as unit in $\mathcal{O}_\mathbb{K}$ (see table 3 of \cite{grasi2}). However, in \cite{grasi2} only lattices satisfying the condition $2l=p-1$ were constructed. For example, in \cite{grasi2} it was not possible to construct $\mathbb{Z}_7$ and $D_7$ neither calculate their minimum product distance. Here this is possible.

In relation to general constructions of $\mathbb{Z}^n$ and $D_n$ for any integer number $n>1$, again we can calculate the minimum product distance if some hypothesis are valid (see theorems \ref{teorema8} and \ref{teorema9}). In the table \ref{tab2}, we compare the relative minimum product distance between $\mathbb{Z}^n$ and $D_n$ in each even dimension $n$ between $4$ and $14$. In particular, we can highlight the observed value of the $n$-th root of the relative minimum product distance of $D_6$. In \cite{grasi2} it was constructed a lattice $D_6$ having $\sqrt[6]{d_{p,rel}(D_6)} \simeq 0.24285$, while using the construction made here this value gets to $\simeq 0.4395$.

\begin{table}
\begin{center}
	\caption{Comparison between relative minimum product distance of $\mathbb{Z}^n$ and $D_n$ and between their center density in some even dimensions $n$}\label{tab2}
\begin{tabular}{|c|c|c|c|c|c|c|c|c|}
	\hline
	$n$ & $l$ ($p$) & $m$ & $k=2^{m-2}$ & $\sqrt[\eta]{d_{p,rel}(\mathbb{Z}^n)}$ & $\sqrt[\eta]{d_{p,rel}(D_n)}$ & $\delta(\mathbb{Z}^n)$ & $\delta(D_n)$ \\ \hline
	$4$ & $1$ & $4$ & $k=2^2$ & $0.3855...$ & $0.3242...$ & $0.0625...$ & $0.125$ \\ \hline
	$6$ & $3$ ($p=7$) & $3$ & $k=2^1$ & $0.3108...$ & $0.4395...$ & $0.015625$ & $0.0625$ \\ \hline
	$8$ & $1$ & $5$ & $k=2^3$ & $0.2610...$ & $0.2013...$ & $0.003906...$ & $0.03125$ \\ \hline
	$10$ & $5$ ($p=11$) & $3$ & $k=2^1$ & $0.2278...$ &$0.3222...$ &$0.000976...$ &$0.015625$ \\ \hline
	$12$ & $3$ ($p=7$) & $4$ & $k=2^2$ & $0.2015...$ &$0.2850...$ &$0.000244...$ &$0.0078125$ \\ \hline
	$14$ & $7$ ($p=29$) & $3$ & $k=2^1$ & $0.1404...$ & $0.1986...$ & $0.000061...$ & $0.00390625$\\ \hline
	\end{tabular}
\end{center}
\end{table}

Finally, let's compare $\mathbb{Z}^n$ and $D_n$ using the normalized minimum product distance. Since the volume of the constructed $\mathbb{Z}^n$ is $1$ and the volume of its sublattice $D_n$ is $2$ then $d_{p,nor}(\mathbb{Z}^n)=d_{p,min}(D_n)$ and $d_{p,nor}(D_n)=d_{p,min}(D_n)/2$. So, for the constructions that we can calculate the minimum product distance of $\mathbb{Z}^n$ and $D_n$ in this work, if $n>1$ is an odd number,
$$\frac{\sqrt[\eta]{d_{p,nor}(\mathbb{Z}^n)}}{\sqrt[n]{d_{p,nor}(D_n)}}=\sqrt[n]{2} \xrightarrow{n \longrightarrow \infty} 1$$
if $n>1$ is a power of two,
$$\frac{\sqrt[n]{d_{p,nor}(\mathbb{Z}^n)}}{\sqrt[n]{d_{p,nor}(D_n)}}=1$$
and if $n$ is a product of an odd number ($>1$) by a power of two ($>1$) then
$$\frac{\sqrt[n]{d_{p,nor}(\mathbb{Z}^n)}}{\sqrt[n]{d_{p,nor}(D_n)}}=2^{\frac{1}{n}-1} \xrightarrow{n \longrightarrow \infty} 1/2.$$
Since the quotient in \ref{eq9} is valid here too, we can conclude that is better use $D_n$ than $\mathbb{Z}_n$ for both Rayleigh fadind channel and gaussian channel for any $n>1$. In fact, even in the case in which $n>1$ is a odd number, $\mathbb{Z}^n$ loses its vantage as $n$ grows.

\begin{table}
	\begin{center}
		\caption{Comparison between normalized minimum product distance of $\mathbb{Z}^n$ and $D_n$ and between their center density in some even dimensions $n$}\label{tab3}
		\begin{tabular}{|c|c|c|c|c|c|c|c|c|}
			\hline
			$n$ & $l$ ($p$) & $m$ & $k=2^{m-2}$ & $\sqrt[n]{d_{p,nor}(\mathbb{Z}^n)}$ & $\sqrt[n]{d_{p,nor}(D_n)}$ & $\delta(\mathbb{Z}^n)$ & $\delta(D_n)$ \\ \hline
			$3$ & $3$ ($p=7$) & $2$ & $k=1$ & $0.1428...$ & $0.1133...$ & $0.0625...$ & $0.125$ \\ \hline
			$4$ & $1$ & $4$ & $k=2^2$ & $0.3855...$ & $0.3855...$ & $0.0625...$ & $0.125$ \\ \hline
			$5$ & $5$ ($p=11$) & $2$ & $k=1$ & $0.0082...$ & $0.0071...$ & $0.0625...$ & $0.125$ \\ \hline
			$6$ & $3$ ($p=7$) & $3$ & $k=2^1$ & $0.3108...$ & $0.5538...$ & $0.015625$ & $0.0625$ \\ \hline
			$7$ & $7$ ($p=29$) & $2$ & $k=1$ & $0.00004...$ & $0.00003...$ & $0.0625...$ & $0.125$ \\ \hline
			$8$ & $1$ & $5$ & $k=2^3$ & $0.2610...$ & $0.2610...$ & $0.003906...$ & $0.03125$ \\ \hline
			$9$ & $9$ ($p=19$) & $2$ & $k=1$ & $0.000007...$ & $0.000007...$ & $0.0625...$ & $0.125$ \\ \hline
			$10$ & $5$ ($p=11$) & $3$ & $k=2^1$ & $0.2278...$ &$0.4252...$ &$0.000976...$ &$0.015625$ \\ \hline
		\end{tabular}
	\end{center}
\end{table}

In the table \ref{tab3} we can observe that the values of $\sqrt[n]{d_{p,nor}}$ of $\mathbb{Z}^n$ and $D_n$ are not good when $n$ is a odd number, but it is good when $n$ is the product of a odd number ($>1$) by a power of two ($>1$).

Therefore, we can conclude that $D_n$ has interesting advantages for practical use when compared with $\mathbb{Z}^n$ both for Rayleigh fading channel and for gaussian channel.

\section{Acknowledgment}

The authors would like to thank Dr. Trajano P. N. Neto and Dr. Grasiele C. Jorge for suggestions and discussions about this work. The authors also thank Dr. Frederique Oggier for the readiness to answer some of our doubts via e-mail.


\begin{thebibliography}{99}
\bibitem{sloane} Conway, J.H. Sloane, N.J.A. \textbf{Sphere packings, lattices and group}. Springer-Verlag, Nova Iorque, 3rd. Ed., 1998.
\bibitem{boutros} Boutros, J., Viterbo, E., Rastello, C., Belfiore, J. \textbf{Good Lattice Constellations for both Rayleigh Fading and Gaussian Channel}. IEEE Transactions on Information Theory, v.42, n.2, p.502-517, 1996.
\bibitem{bayer-oggier} Bayer-Fluckiger E., Oggier, F., Viterbo, E. \textbf{New Algebraic Constructions of Rotated Zn-Lattice Constellations for the Rayleigh Fading Channel}. IEEE Trans. Inform. Theory 50(4), 702?714, 2004.
\bibitem{sethoggier} Sethuraman, B.A., Oggier, F. \textbf{Constructions of Orthonormal Lattices and Quaternion Division Algebra for Totally Real Number Fields}. AAECC 2007, LNCS 4851, p. 138-147, 2007.
\bibitem{bayer-nebe} Bayer-Fluckiger, E., Nebe, G. \textbf{On the Euclidian minimum of some real number fields}. Journal de theorie des nombres de Bordeaux, v.17, n.2, p. 437-454, 2005.
\bibitem{andrade} Andrade, A.A., Alves, C., Carlos, T.B. \textbf{Rotated lattices via the cyclotomic field $\mathbb{Q}(\zeta_{2^r})$}. International Journal of Applied Mathematics, v.19, n.3, p.1-13, 2010.
\bibitem{grasi2} Jorge, G.C., Ferrari, A.J., Costa, S.I.R. \textbf{Rotated $D_n$-lattices}.  Journal of Number Theory 132, 2012.
\bibitem{grasi} Jorge, G. C. \textbf{Reticulados $q$-ários e algébricos}. Thesis. Imecc, Unicamp. Campinas, 2012.
\bibitem{serre} Serre, J. \textbf{A course in Arithmetic}. Springer-Verlag New York, vol. 7, 1973.
\bibitem{elia} Elia, P., Sethuraman, B.A., Kumar, P.V. \textbf{Perfect Space-Time Codes with Minimum and Non-Minimum Delay for Any Number of Antennas}. IEEE Transactions on Information Theory, 2005.
\bibitem{erez} Erez, B. \textbf{The Galois structure of the Trace Form in Extensions of Odd Prime Degree.} Journal of Algebra 118, 438?446, 1988.
\bibitem{trajano} Lopes, J.O.D., Neto, T.P.N., Interlando, J.C. \textbf{Os computing discriminant of subfields of $\mathbb{Q}(\zeta_{p^r})$}, Journal of Number Theory, v.96, n.2, p.319-325, 2002.
\bibitem{lopes} Lopes, J.O.D. \textbf{Discriminants of subfields of $\mathbb{Q}(\zeta_{2^r})$}. Journal of Algebra And Its Applications, v.2, p.463-469, 2003.
\bibitem{rib} Ribemboim, P. \textbf{Classical Theory of Algebraic Numbers}. 1 ed. Springer-Verlag New York, 2001.
\bibitem{wash} Washington, L. \textbf{Introduction to Cyclotomic Fields}. 2 ed. Springer-Verlag New York, 1995.
\bibitem{tabelanebe} Nebe, G. \textbf{Index to Catalogue of Lattices}. Avaiable in <http://www.math.rwth-aachen.de/~Gabriele.Nebe/LATTICES>. Accessed in 2016.
\bibitem{samuel} Samuel, P. \textbf{Algebraic Theory of Numbers}. Paris. Hermann, 1970.
\end{thebibliography}
\end{document}